\def\Vecc{\mathrm{Vec}}
\def\Dend{\mathrm{Dend}}
\def\Var{\mathrm{Var}}
\def\Perm{\mathrm{Perm}}
\def\Comm{\mathrm{Comm}}
\def\CommTrias{\mathrm{CommTrias}}
\def\sCommTrias{\mathrm{sCommTrias}}
\def\Dias{\mathrm{Dias}}
\def\PreLie{\mathrm{PreLie}}
\def\PostLie{\mathrm{PostLie}}
\def\dend{\mathrm{dend}\,}
\newtheorem{thm}{Theorem}
\newtheorem{lem}{Lemma}
\newtheorem{remark}{Remark}
\newtheorem{prop}{Proposition}
\newtheorem{cor}{Corollary}
\theoremstyle{definition}
\newtheorem{defn}{Definition}
\newtheorem{exmp}{Example}
\title{On embedding of dendriform algebras into Rota---Baxter algebras}
\author{V.~Yu.~Gubarev$^{1)}$, P.~S.~Kolesnikov$^{2)}$}
\address{$^{1)}$Novosibirsk State University, Novosibirsk, Russia}
\email{vsevolodgu@mail.ru}
\address{$^{2)}$Sobolev Institute of Mathematics, Novosibirsk, Russia}
\email{pavelsk@math.nsc.ru}
\begin{document}

\begin{abstract}
Following a recent work by C.~Bai, O.~Bellier, L.~Guo, X.~Ni (arXiv:1106.6080)
we define what is a dendriform di- or trialgebra in an arbitrary variety Var of binary algebras
(associative, commutative, Poisson, etc.).
We prove that every dendriform dialgebra in Var can be embedded into
a Rota---Baxter algebra of weight zero in the same variety,
and every dendriform trialgebra can be embedded into a Rota---Baxter algebra of nonzero weight.
\end{abstract}

\maketitle

\section{Introduction}

In 1960 Glen Baxter  \cite{Baxter60}
introduced an identity defining what is now called Rota---Baxter
operator in developing works of
F.~Spitzer \cite{Spitz56} in fluctuation theory.
By definition,
a Rota---Baxter operator $R$ of weight $\lambda $ on an algebra $A$
is a linear map on $A$ such that
\[
R(x)R(y) = R(xR(y) + R(x)y) + \lambda R(xy), \quad x,y\in A,
\]
where $\lambda $ is a scalar from the base field.

Later, commutative associative
algebras with such an operator were studied by G.-C.~Rota and others
\cite{Cartier72, Rota68}.
In 1980s, these operators appeared in the context
of Lie algebras independently in works A.~A.~Belavin and V.~G.~Drinfeld
\cite{BelaDrin82} and M.~A.~Semenov-Tian-Shansky \cite{Semenov83} in
research of solutions of classical Young---Baxter equation named in the honour
of physicists Chen Ning Yang and Rodney Baxter.

For the present time, numerous connections of Rota---Baxter
operators with other areas of mathematics are found.
The latter include quantum field theory, Young---Baxter equations, operads,
Hopf algebras, number theory etc. \cite{Hopf03, QFT00, QFT01, NumThe06, Leroux04}.

The notion of a Leibniz algebra
introduced by J.-L.~Loday \cite{Loday93} is originated from
cohomology theory of Lie algebras; this is a noncommutative
analogue of Lie algebras.
Associative dialgebras (now often called diassociative algebras)
emerged in the paper by J.-L.~Loday and T.~Pirashvili \cite{LP93},
they play the role of universal enveloping associative algebras
for Leibniz algebras.
Dendriform dialgebras were defined by J.-L.~Loday in 1999 \cite{Dialg99}
in his study of algebraic $K$-theory.
Moreover, they occur to be Koszul-dual to diassociative algebras.
In 2001, J.-L.~Loday and V.~Ronco  \cite{Trialg01}
introduced a generalization of dialgebras---trialgebras
and dual to them dendriform trialgebras.

M.~Aguiar in 2000 \cite{Aguiar00} was the first who noticed a
relation between Rota---Baxter algebras and
dendriform algebras.
He proved that an associative algebra
with a Rota---Baxter operator $R$
of weight zero relative to operations $a\prec b=aR(b)$, $a\succ b=R(a)b$
is a dendriform dialgebra. In 2002, K.~Ebrahimi-Fard \cite{Fard02} generalized
this fact to the case of Rota---Baxter algebras of arbitrary weight and obtained
as result both dendriform dialgebra and dendriform trialgebra.
In the paper by K.~Ebrahimi-Fard and L.~Guo \cite{FardGuo07} in 2007,
universal enveloping Rota---Baxter algebras of weight $\lambda$
for dendriform dialgebras and trialgebras were defined.

The natural question: Whether an arbitrary dendriform di- or trialgebra
can be embedded into its universal enveloping Rota---Baxter algebra
was solved positively in \cite{FardGuo07} for free dendriform algebras
only.
In 2010, Y.~Chen and Q.~Mo
proved that any dendriform dialgebra over a field of characteristic zero
can be embedded into an appropriate Rota---Baxter
algebra of weight zero~\cite{Chen11}
using the Gr\"obner---Shirshov bases technique for Rota---Baxter algebras
developed in \cite{BokutChen}.

To solve the problem for any dendriform dialgebra (or trialgebra)
from a Rota---Baxter algebra of arbitrary weight, C.~Bai, L.~Guo and K.~Ni
\cite{BaiGuoNi10} introduced in 2010  a notion of
$\mathcal{O}$-operators,
a generalization of Rota---Baxter operators and proved that
every dendriform di- or trialgebra can be explicitly obtained
from an algebra with a $\mathcal{O}$-operator.

In a recent work \cite{BBGN2011}, the results of Aguiar and Ebrahimi-Fard
were extended to the case of arbitrary operad of Rota---Baxter algebras
and dendriform dialgebras and trialgebras.

In the present work, we solve the following problem.
Given a binary operad $\mathcal P_{\Var }$ governing a variety $\Var $
of $\Omega $-algebras ($\Omega $ is a set of binary operations),
we define what is a di- or tri-$\Var $-dendriform algebra (following \cite{BBGN2011}).
Then we construct a Rota---Baxter $\Omega $-algebra from the variety
$\Var $ such that the initial dendriform di- or trialgebra
embeds into this Rota---Baxter algebra in the sense of
Aguiar and Ebrahimi-Fard (for trialgebras, we demand $\lambda\neq0$).

The idea of the construction can be easily illustrated as follows.
Suppose $(A,\prec, \succ ,\cdot )$ is an (associative) dendriform
trialgebra. Then the direct sum of two isomorphic copies of $A$,
the space $\hat A = A\oplus A'$, endowed with a binary operation
\[
a*b = a\prec b + a\succ b + a\cdot b,
\quad
a*b' = (a\succ b)', \quad
a'*b = (a\prec b)', \quad
a'*b' = (a\cdot b)'
\]
for $a,b\in A$, is an associative algebra.
Moreover, the map $R(a')=a$, $R(a)=-a$ is a Rota---Baxter operator
of weight 1 on $\hat A$. The embedding of $A$ into $\hat A$ is given
by $a\mapsto a'$, $a\in A$.

In this work, we also introduce and consider some modification of Loday's notion of
trialgebras which we will call skew trialgebras (or s-trialgebras, for short).
This class of algebras appears from differential and
$\mathbb{Z}$-conformal algebras. Associative skew trialgebras
turn to be related with a natural noncommutative analogue
of Poisson algebras. Dendriform s-trialgebras are
Koszul dual to s-trialgebras
and they are also connected with Rota---Baxter algebras
in the same way as usual dendriform dialgebras and trialgebras.

\section{Operads for di- and trialgebras}

Our main object of study is the class of dendriform di- or tri-algebras.
In this section, we start with objects from the ``dual world'' in the sense
of Koszul duality.

The notion of an operad once introduced in \cite{May} has been
reincarnated in the beginning of 2000s. We address the reader
to either of perfect expositions of this notion and its
applications in universal algebra, e.g.,
\cite{GinzKapr, Leinster, Stasheff}.

Throughout this paper, $\Bbbk $ is an arbitrary base field.
All operads are assumed to be families of linear spaces, compositions
are linear maps, and the actions of symmetric groups are also linear.

By an $\Omega $-algebra we mean a linear space equipped with
a family of binary linear operations $\Omega = \{ \circ_i \mid i\in I\}$.
Denote by $\mathcal F$ the free operad governing the variety of all
$\Omega $-algebras. For every natural $n>1$, the space
$\mathcal F(n)$ can be identified with the space spanned by all binary trees
with $n$ leaves
labeled by $x_1,\dots, x_n$, where each vertex (which is not a leaf) has a label
from $\Omega $.

Let $\Var $ be a variety of $\Omega $-algebras defined by a family
$S$ of poly-linear identities of any degree
(which is greater than one).
Denote by $\mathcal P_{\Var} $ the binary operad governing the
variety $\Var$, i.e., every algebra from $\Var $  is a functor
from $\mathcal P_{\Var}$ to the multi-category $\Vecc$ of linear spaces
with poly-linear maps.

Denote by
$\Omega^{(2)}$ and $\Omega ^{(3)}$
the sets of binary operations
$\{\vdash_i, \dashv_i \mid i\in I \}$ and
 $\Omega^{(2)}\cup \{\perp_i \mid i\in I \}$, respectively.
Similarly, let $\mathcal F^{(2)}$ and $\mathcal F^{(3)}$
stand for the free operads governing the varieties of
all $\Omega^{(2)}$- and $\Omega^{(3)}$-algebras, respectively.

We will need the following important operads.

\begin{exmp}
Operad $\Perm$ introduced in \cite{Chapoton}
is governing the variety of Perm-algebras \cite[p.~17]{Zinbiel}.
Namely, $\Perm(n) =\Bbbk^n$ with a standard basis
$e_i^{(n)}$, $i=1,\dots, n$. Every $e^{(n)}_i$ can be identified
with an associative and commutative poly-linear monomial in $x_1,\dots, x_n$
with one emphasized variable~$x_i$.
\end{exmp}

\begin{exmp}
Operad $\CommTrias$ introduced in \cite{Vallette2007}
is governing the variety of associative and commutative
trialgebras, see \cite[p.~25]{Zinbiel}.
Namely, $\CommTrias(n)$ has a standard basis
$e_H^{(n)}$, where $\emptyset \ne H\subseteq \{1,\dots, n\}$.
Such an element (corolla) can be identified with
a commutative and associative monomial with several
emphasized variables $x_j$, $j\in H$.
\end{exmp}

The number of observations made, for example, in
\cite{Vallette2008, Chapoton, Kol2008} lead to the following natural
definition.

\begin{defn}
A {\em di-$\Var $-algebra\/} is a functor from
$\mathcal P_{\Var}\otimes \Perm $ to $\Vecc $, i.e., an
$\Omega^{(2)}$-algebra satisfying the following identities:
\begin{gather}
(x_1\dashv_i x_2)\vdash_j x_3 = (x_1\vdash_i x_2)\vdash_j x_3, \quad
x_1\dashv_i (x_2 \vdash_j x_3) = x_1\dashv_i (x_2 \dashv_j x_3),
                                                    \label{eq:0-identities} \\
f(x_1,\dots , \dot x_k, \dots , x_n), \quad f\in S, \ n=\deg f, \ k=1,\dots, n,
                                                    \label{eq:Dot-identities}
\end{gather}
where
$i,j\in I$, and
$f(x_1,\dots , \dot x_k, \dots , x_n)$ stands for $\Omega^{(2)}$-identity
obtained from
$f$ by means of replacing all products $\circ_i$
with either $\dashv_i$ or $\vdash_i$
in such a way that all horizontal dashes point to the selected variable~$x_k$.
\end{defn}

\begin{exmp}
Let $|\Omega |=1$.
The variety of diassociative algebras (or associative dialgebras)
\cite{LP93} is given by
\eqref{eq:0-identities} together with
\begin{equation}\label{eq:Diass}
\begin{gathered}
x_1\dashv (x_2\dashv x_3) = (x_1\dashv x_2)\dashv x_3, \quad
x_1\vdash (x_2\dashv x_3) = (x_1\vdash x_2)\dashv x_3, \\
x_1\vdash (x_2\vdash x_3) = (x_1\vdash x_2)\vdash x_3.
\end{gathered}
\end{equation}
\end{exmp}

\begin{exmp}
Consider the class of Poisson algebras ($|\Omega |=2$), where $\circ_1$ is an
associative and commutative product (we will denote $x\circ_1 y $ simply by $xy$)
and $\circ_2$ is a Lie product ($x\circ_2 y = [x,y]$) related with $\circ_1$ by
the following identity:
\[
 [x_1x_2, x_3] = [x_1,x_3]x_2 + x_1[x_2,x_3].
\]
Then a di-Poisson algebra is a linear space equipped by
four operations
$(\cdot * \cdot)$, $[\cdot * \cdot ]$, $*\in \{\vdash, \dashv\}$
satisfying \eqref{eq:0-identities} and  \eqref{eq:Dot-identities}.
Commutativity of the first product and anticommutativity
of the second one allow to reduce these four operations to
only two, since  \eqref{eq:Dot-identities} implies
\[
(x_1\dashv x_2) = (x_2\vdash x_1), \quad
[x_1\dashv x_2] = - [x_2\vdash x_1].
\]
With respect to the operations
\[
xy :=(x\vdash y), \quad [x,y]:=[x\vdash y],
\]
the identities \eqref{eq:0-identities} and  \eqref{eq:Dot-identities}
 are equivalent to the following system:
\[
\begin{gathered}
x_1 (x_2 x_3) = (x_1 x_2) x_3, \quad
(x_1x_2)x_3 = (x_2x_1)x_3, \\
[x_1,[x_2,x_3]] - [x_2,[x_1,x_3]] = [[x_1,x_2],x_3], \\
[x_1x_2, x_3] =  x_1[x_2,x_3] + x_2 [x_1,x_3], \\
[x_1, x_2x_3] = [x_1,x_2]x_3 + x_2[x_1,x_3].
\end{gathered}
\]
In \cite{Dialg99}, a more general class was introduced (without
assuming commutativity of the associative product).
\end{exmp}

A similar approach works for trialgebras.

\begin{defn}\label{defn:TriVarAlgebra}
A {\em tri-$\Var $-algebra\/} is a functor from
$\mathcal P_{\Var}\otimes \CommTrias $ to $\Vecc $, i.e., an
$\Omega^{(3)}$-algebra satisfying the following identities:
\begin{equation}\label{eq:tri0-identities}
\begin{gathered}
(x_1 \ast_i x_2)\vdash_j x_3 = (x_1\vdash_i x_2)\vdash_j x_3, \quad
x_1\dashv_i (x_2 \ast_j x_3) = x_1\dashv_i (x_2 \dashv_j x_3),\\
\ \ast\in \{\vdash, \dashv, \perp\},
\end{gathered}
\end{equation}
\begin{equation}                    \label{eq:triDot-Identities}
\begin{gathered}
f(x_1,\dots , \dot x_{k_1}, \dots , \dot x_{k_l}, \dots, x_n),
\\
 f\in S, \ n=\deg f, \ 1\le k_1<\dots <k_l\le n,
\ l=1,\dots, n. \nonumber
\end{gathered}
\end{equation}
where
$i,j\in I$,  and
$f(x_1,\dots , \dot x_{k_1}, \dots , \dot x_{k_l}, \dots, x_n)$
is the result of a procedure described below.
(It is somewhat similar to the tri-successor procedure from \cite{BBGN2011}).
\end{defn}

Suppose $u=  u(x_1,\dots, x_n)\in \mathcal F(n)$ is a non-associative $\Omega$-monomial.
Fix $l$ indices $1\le k_1<\dots <k_l\le n$, and
denote the the monomial $u$ with $l$ emphasized variables
$x_{k_j}$, $j=1,\dots, l$,
by $u^H$, $H=\{k_1,\dots, k_l \}$.
Now, identify  $u^H$
 with an element from
$\mathcal F(n)\otimes \CommTrias(n)$ in the natural way:
\[
u^H \equiv  u\otimes e^{(n)}_{k_1,\dots, k_l}.
\]
It can be considered as a binary tree from $\mathcal F(n)$
with $l$ emphasized leaves.

\begin{center}
\medskip
\includegraphics{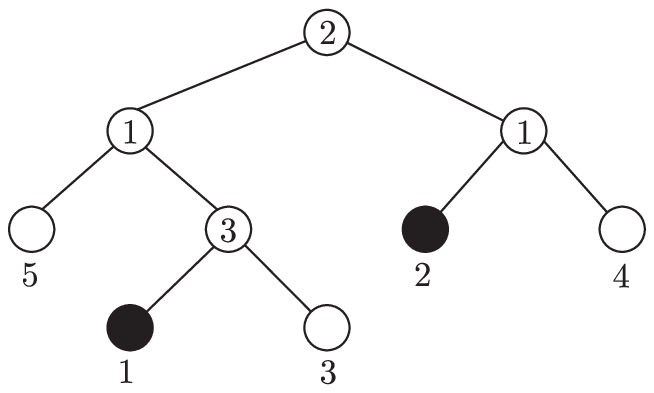}  \\
Fig. 1\\
Example: $u^H= (x_5\circ_1 (\dot x_1\circ_3 x_3))\circ_2 (\dot x_2\circ_1 x_4)$,
$H=\{1,2\}$.
Emphasized leaves
are colored in black, others---in white.
\medskip
\end{center}

Now the task is to mark all vertices of $u^H$ with
appropriate labels from $\Omega^{(3)}$.
Define the family of maps
\[
\Phi(n) : \mathcal F(n)\otimes \CommTrias(n) \to \mathcal F^{(3)}(n), \quad
 n\ge 1,
\]
as follows.
Given
$u\otimes e^{(n)}_{k_1,\dots, k_l}\in \mathcal F(n)\otimes \CommTrias(n)$,
the structure of the tree $u$ as well as labels of leaves do
not change. For $n=1$, there is nothing to do.
If  $u=v\circ_i w$ then
the set $H=\{k_1,\dots, k_l\}$
 of emphasized variables splits into two subsets,
$H=H_1\dot\cup H_2$, where $H_1$ consists of all
$k_j$ such that $x_{k_j}$ appears in~$v$.
Assume $\deg v = p$, then $\deg w=n-p$.
Set
\[
\Phi(n)(u^H) = \begin{cases}
 \Phi(p)(v^{H_1}) \perp_i  \Phi(n-p)(w^{H_2}), & \mbox{if } H_1,H_2\ne \emptyset, \\
 v^{\vdash}\vdash_i \Phi(n-p)(w^H), & \mbox{if } H_1=\emptyset, \\
 \Phi(p)(v^H)\dashv_i w^{\dashv}, & \mbox{if } H_2=\emptyset,
 \end{cases}
\]
where $v^\vdash$  (or $w^{\dashv} $) stands for the tree where each vertex label $\circ_j$
turns into $\vdash_j $ ($\dashv_j$).

One may extend $\Phi(n)$ by linearity, so, if
$f(x_1,\dots, x_n) = \sum\limits_{\xi }\alpha_\xi u_\xi \in \mathcal F(n)$
then
\[
f(x_1,\dots , \dot x_{k_1}, \dots , \dot x_{k_l}, \dots, x_n):=
\sum\limits_{\xi }\alpha_\xi  \Phi(n)(u^H_\xi).
\]

\begin{center}
\medskip
\includegraphics{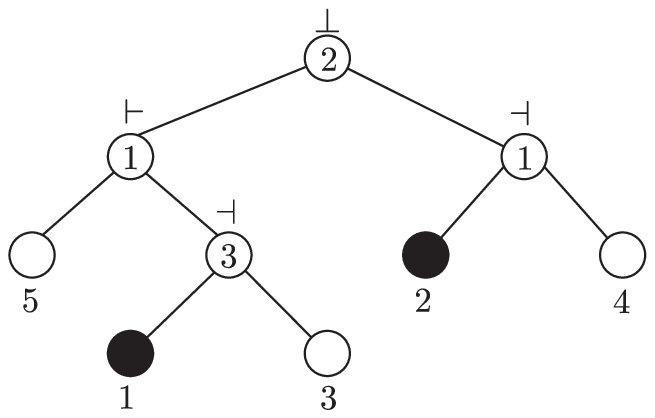}  \\
Fig. 2\\
Example: $\Phi(5)(u^H)= (x_5\vdash_1 (x_1\dashv_3 x_3))\perp_2 (x_2\dashv_1 x_4)$
for $u$ and $H$ as on Fig.~1.   \\
For each vertex which is not a leaf we assign $\perp $ if both left and right
branches have emphasized leaves. If only left branch contains an emphasized leaf
then we assign $\dashv $ to this vertex and to all vertices of the right branch.
Symmetrically, if only right branch contains an emphasized leaf then we assign
$\vdash $ to this vertex and to all vertices of the left branch.
\medskip
\end{center}

\begin{exmp}[Tri-associative algebra]\label{exmp:Triass}
Let $\Var =\mathrm{As}$ be the variety of associative algebras.
It has only one defining identity
$(x_1,x_2,x_3)=x_1(x_2x_3) - (x_1x_2)x_3$,
that turns into seven identities
\eqref{eq:triDot-Identities}. Indeed,
each nonempty subset $H\subseteq \{1,2,3\}$
gives rise to an identity of $\Omega^{(3)}$-algebras,
$\Omega^{(3)}=\{\vdash, \dashv, \perp\}$.
If $|H|=1$ then these are just the identities of a di-$\mathrm{As}$-algebra
 \eqref{eq:Diass}.
For $|H|=2$, we obtain three identities, e.g., if
$H=\{1,3\}$ then the corresponding identity
$(\dot x_1, x_2, \dot x_3)$ is
$x_1\perp (x_2 \vdash x_3)- (x_1\dashv x_2)\perp x_3$
If $H=\{1,2,3\}$ then we obtain the relation of associativity for $\perp $.
Together with \eqref{eq:tri0-identities}, these are exactly the
defining identities of triassociative algebras \cite[p.~23]{Zinbiel}.
\end{exmp}

\begin{exmp}
Let $A$ be an associative algebra.
Then the space $A^{\otimes3}$ with respect to operations
\begin{gather*}
a\otimes b\otimes c\vdash a'\otimes b'\otimes c'=abca'\otimes b'\otimes c',\quad
a\otimes b\otimes c\dashv a'\otimes b'\otimes c'=a\otimes b\otimes ca'b'c',\\
a\otimes b\otimes c\perp a'\otimes b'\otimes c'=a\otimes bca'b'\otimes c'
\end{gather*}
is a triassociative algebra.
\end{exmp}

The following construction invented in \cite{Pozh2009} for dialgebras
also works for trialgebras.

Let $A$ be a $0$-trialgebra, i.e., an $\Omega^{(3)}$-algebra which
satisfies \eqref{eq:tri0-identities}.
Then
$A_0=\mathrm{Span}\,\{a\vdash_i b-a\dashv_i b,\,
a\vdash_i b-a\perp_i b\mid a,b\in A,\, i\in I\}$
is an ideal of $A$. The quotient $\bar A = A/A_0$ carries
a natural structure of an $\Omega $-algebra.
Consider the formal direct sum $\hat A = \bar A\oplus A$ with (well-defined) operations
\begin{equation}\label{eq:EilenbergDialg}
\bar{a}\circ_i x=a\vdash_i x,\quad x\circ_i\bar{a}=x\dashv_i a,\quad \bar{a}\circ_i\bar{b}
=\overline{a\vdash_i b},\quad x\circ_i y=x\perp_i y,
\end{equation}
$\bar a, \bar b \in \bar A$, $x,y\in A$.

\begin{prop}\label{prop:EilenbergDouble}
A trialgebra $A$ satisfying \eqref{eq:tri0-identities} is a tri-$\Var$-algebra if and only if $\hat{A}$
is an algebra from the variety $\Var$.
\end{prop}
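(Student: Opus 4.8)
The plan is to reduce both implications to a single combinatorial lemma describing how non-associative monomials evaluate in $\hat A$. Call an element of $\hat A$ \emph{barred} if it lies in the summand $\bar A$ and \emph{unbarred} if it lies in the summand $A$. Reading off \eqref{eq:EilenbergDialg}, a product $p\circ_i q$ in $\hat A$ is barred exactly when both $p$ and $q$ are barred, and the four rules $\bar a\circ_i x=a\vdash_i x$, $x\circ_i\bar a=x\dashv_i a$, $\bar a\circ_i\bar b=\overline{a\vdash_i b}$, $x\circ_i y=x\perp_i y$ match, branch for branch, the recursive definition of the maps $\Phi(n)$. Accordingly the lemma would read: for $u=u(x_1,\dots,x_n)\in\mathcal F(n)$, elements $a_1,\dots,a_n\in A$ and a subset $H\subseteq\{1,\dots,n\}$, the value in $\hat A$ of $u$ evaluated on the tuple whose $k$-th entry is $a_k\in A\subseteq\hat A$ for $k\in H$ and $\overline{a_k}\in\bar A\subseteq\hat A$ for $k\notin H$ equals $\Phi(n)(u^H)(a_1,\dots,a_n)\in A$ when $H\neq\emptyset$, and equals the value of $u$ in the $\Omega$-algebra $\bar A$ on $(\overline{a_1},\dots,\overline{a_n})$ when $H=\emptyset$. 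I would prove this by induction on $\deg u$: the case $n=1$ is immediate, and for $u=v\circ_i w$ one uses precisely the splitting $H=H_1\dot\cup H_2$ (with $H_1$ the emphasized leaves inside $v$) from the definition of $\Phi$, the three cases $H_1,H_2\ne\emptyset$ / $H_1=\emptyset$ / $H_2=\emptyset$ matching the three rules of \eqref{eq:EilenbergDialg} that involve an unbarred factor, plus the rule $\bar a\circ_i\bar b=\overline{a\vdash_i b}$ for the all-barred subtree. Extended by linearity, the lemma says: for $f\in\mathcal F(n)$ and nonempty $H=\{k_1<\dots<k_l\}$ the value of $f$ in $\hat A$ on the corresponding barred/unbarred tuple is exactly $f(x_1,\dots,\dot x_{k_1},\dots,\dot x_{k_l},\dots,x_n)$, the left side of \eqref{eq:triDot-Identities}, evaluated in $A$, while for $H=\emptyset$ it is $f$ evaluated in $\bar A$.

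Granting the lemma, the implication ``$\hat A\in\Var\Rightarrow A$ is a tri-$\Var$-algebra'' is immediate: $A$ satisfies \eqref{eq:tri0-identities} by hypothesis, and for every $f\in S$ and nonempty $H$ the identity \eqref{eq:triDot-Identities} holds in $A$ because, by the lemma, its left side evaluated on any $a_1,\dots,a_n\in A$ equals $f$ evaluated in $\hat A$ on a suitable tuple, which is $0$ since $f$ is an identity of $\Var$.

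For the converse, assuming $A$ is a tri-$\Var$-algebra, I would first check $\bar A\in\Var$, using only the identities \eqref{eq:triDot-Identities} with $|H|=1$. Since modulo $A_0$ the three horizontal products agree and $A_0$ is an ideal, for any $\Omega$-monomial $u$ and any assignment of horizontal labels to its internal vertices the image in $\bar A$ of the resulting element of $A$ is independent of the assignment and equals $u$ evaluated in $\bar A$; applying this to the assignment produced by $\Phi(n)(u^{\{1\}})$ and summing over the monomials of an $f=\sum_\xi\alpha_\xi u_\xi\in S$ shows that $f$ evaluated in $\bar A$ is the image of $f(\dot x_1,x_2,\dots,x_n)$ evaluated in $A$, hence $0$. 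Now take $f\in S$ of degree $n$ and arbitrary $z_1,\dots,z_n\in\hat A$; write $z_k=\overline{b_k}+c_k$ with $b_k,c_k\in A$ and expand $f(z_1,\dots,z_n)$ by poly-linearity into $2^n$ terms, indexed by the subset $H\subseteq\{1,\dots,n\}$ of slots in which $c_k$ was chosen, the others getting $\overline{b_k}$. By the lemma each term with $H\neq\emptyset$ equals the left side of \eqref{eq:triDot-Identities} for this $H$ evaluated in $A$, hence vanishes since $A$ is a tri-$\Var$-algebra, while the unique term with $H=\emptyset$ equals $f$ evaluated in $\bar A$, which vanishes by the previous step; thus $f$ holds in $\hat A$.

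The real work is the evaluation lemma. It is not deep, but it calls for care on two points: one must track which subtree of a monomial carries an emphasized leaf and see that this bookkeeping reproduces the three branches of the recursion for $\Phi$, in particular that an all-barred left (resp.\ right) subtree contributes $v^\vdash$ (resp.\ $w^\dashv$) exactly as in the definition of $\Phi$; and one must invoke \eqref{eq:tri0-identities} together with the fact that $A_0$ is an ideal to know that the four partial operations of \eqref{eq:EilenbergDialg} are well defined, which is what makes the values in $\hat A$, and hence the statement of the lemma, unambiguous.
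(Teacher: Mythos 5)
Your proposal is correct and follows essentially the same route as the paper: the paper's proof consists precisely of your evaluation lemma (mixed barred/unbarred tuples in $\hat A$ evaluate to $\Phi(n)(f^H)$ applied in $A$, and all-barred tuples evaluate in $\bar A$ to the image of any $\Phi(n)(f^H)$-value), stated without the inductive and poly-linearity details you spell out. Your extra care about well-definedness of \eqref{eq:EilenbergDialg} via \eqref{eq:tri0-identities} and the ideal $A_0$ is exactly the implicit prerequisite the paper relies on.
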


\begin{proof}
The claim follows from the following observation.
If $f(x_1,\dots, x_n)\in \mathcal F(n)$ then
the value $f(\bar a_1, \dots, \bar a_n)$ in $\bar A\subset \hat A$
is just the image of $[\Phi(n)(f^H)](a_1, \dots, a_n)$ in $\bar A$
for any subset $H$; moreover,
the value of $f(x_1, \dots, \dot x_{k_1}, \dots, \dot x_{k_l}, \dots , x_n)$
on $a_1,\dots, a_n\in A$ is equal to
$f(\bar a_1, \dots, a_{k_1}, \dots, a_{k_l}, \dots, \bar a_{n})\in \hat A$,
i.e., one has to add bars to all non-emphasized variables.
\end{proof}

Assuming $x\perp_i y\equiv 0$ for all
$x,y\in A$, $i\in I$,  we obtain the construction from
\cite{Pozh2009}. This construction turns to be
useful in the study of dialgebras (see, e.g., \cite{GubKol, Voronin}).
For a variety $\Var $,
let us denote by $\mathcal D_{\Var}$ and $\mathcal{T}_{\Var} $
the operads governing di- and tri-$\Var $-algebras, respectively.

The structure of a di-$\Var $-algebra may be recovered from a
structure of a $\Var$-pseudo-algebra over an appropriate
bialgebra $H$. Let us recall this notion from \cite{BDK2001}.
Suppose $H$ is a cocommutative bialgebra with a coproduct $\Delta $ and counit
$\varepsilon $. We will use the Swedler notation for $\Delta$, e.g.,
$\Delta(h)=h_{(1)}\otimes h_{(2)}$,
$\Delta^{2}(h):=(\Delta \otimes \mathrm{id})\Delta (h) = (\mathrm{id}\otimes \Delta)\Delta (h) =
h_{(1)}\otimes h_{(2)}\otimes h_{(3)}$, $h\in H$.
The operation
$F\cdot h = F\Delta^{n-1}(h)$, $F\in H^{\otimes n}$,  $h\in H$, turns
$H^{\otimes n}$ into a right $H$-module (the outer
product of right regular $H$-modules).

A unital left $H$-module $C$ gives rise to an operad (also denoted by $C$) such that
\[
C(n) = \{f:C^{\otimes n} \to H^{\otimes n}\otimes _H C \mid
 f\mbox{ is $H^{\otimes n}$-linear} \}.
\]
For example, if $\dim H=1$ then what we obtain is just a linear space with poly-linear maps.
The composition of such maps as well as the action of a symmetric group is defined in \cite{BDK2001}.

In these terms,
if $\Var $ is a variety of $\Omega $-algebras defined by a system of poly-linear identities $S$ then
a $\Var$-pseudo-algebra structure on an $H$-module $C$ is a functor from $\mathcal P_{\Var} $ to
the operad $C$. Such a functor is determined by a family of $H^{\otimes 2}$-linear maps
$*_i: C\otimes C\to H^{\otimes 2}\otimes _H C$ satisfying the identities (c.f. \cite{Kol2006b})
\[
f ^*(x_1, \dots, x_n)=0, \quad f\in S,\ \deg f=n,
\]
where $f^*$ is obtained from $f$ in the following way.
Assume a poly-linear $\Omega $-monomial $u$ in the variables $x_1, \dots, x_n$
turns into a word $x_{\sigma(1)}\dots x_{\sigma(n)}$, $\sigma \in S_n$,
after removing all brackets and symbols $\circ_i$, $i\in I$. Denote  by
$u^*$ the same monomial $u$ with all $\circ_i$ replaced with $*_i$.
Then $u^*$ can be considered as a map
$C^{\otimes n}\to H^{\otimes n}\otimes_H C$, which is not necessary $H^{\otimes n}$-linear.
However, $u^{(*)}=(\sigma\otimes_H\mathrm{id})u^*$ is $H^{\otimes n}$-linear.
Finally, if $f=\sum\limits_{\xi}\alpha_\xi u_\xi$, $\alpha_\xi\in \Bbbk $,
then
\[
f ^*(x_1, \dots, x_n) = \sum\limits_\xi \alpha_\xi u^{(*)}_\xi.
\]

\begin{exmp}[c.f. \cite{BDK2001}]
Consider an $\Omega$-algebra $A$, a cocommutative bialgebra $H$, and define $C=H\otimes A$.
Then $C$ is an pseudo-algebra with respect to the operations
\[
(f\otimes a)*_i (h\otimes b) = (f\otimes h)\otimes_H (a\circ_i b), \quad f,h\in H,\ a,b\in A,\ i\in I.
\]
Such a pseudo-algebra is denoted by $\mathrm{Cur}\,A$ (current pseudo-algebra).
If $A$ belongs to $\Var$ then, obviously, $\mathrm{Cur}\, A$ is a $\Var$-pseudo-algebra over $H$.
\end{exmp}

Given a pseudo-algebra $C$ with operations $*_i$, $i\in I$, one may define operations
$\vdash_i$, $\dashv_i$ on the same space $C$ as follows:
if $a*_i b = \sum\limits_\xi (h_\xi\otimes f_\xi)\otimes_H d_\xi $
then
\begin{equation}\label{eq:Pseudo2Dialg}
a\vdash_i b  =  \sum\limits_\xi \varepsilon(h_\xi)f_\xi d_\xi,
\quad
a\dashv_i b  =  \sum\limits_\xi h_\xi \varepsilon(f_\xi) d_\xi.
\end{equation}

\begin{prop}\label{prop:Pseudo2Dialg}
Let $C$ be a $\Var $-pseudo-algebra.
Then $C^{(0)}$ is a di-$\Var$-algebra.
\end{prop}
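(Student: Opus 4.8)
The plan is to verify separately the two families of defining relations of a di-$\Var$-algebra for $C^{(0)}=(C,\{\vdash_i,\dashv_i\})$: the $\Perm$-type identities \eqref{eq:0-identities} and the ``dotted'' identities \eqref{eq:Dot-identities}. For the first family I would start from the following elementary consequences of \eqref{eq:Pseudo2Dialg}. Since the pseudo-product $*_i\colon C\otimes C\to H^{\otimes 2}\otimes_H C$ is $H^{\otimes 2}$-linear, one has $a*_i(hb)=(1\otimes h)\cdot(a*_i b)$ and $(ha)*_i b=(h\otimes 1)\cdot(a*_i b)$ for $h\in H$; applying $\varepsilon$ to the appropriate leg as prescribed by \eqref{eq:Pseudo2Dialg} gives
\[
a\vdash_i(hb)=h(a\vdash_i b),\quad (ha)\vdash_i b=\varepsilon(h)(a\vdash_i b),\quad a\dashv_i(hb)=\varepsilon(h)(a\dashv_i b),\quad (ha)\dashv_i b=h(a\dashv_i b),
\]
so that $\vdash_i$ is $H$-linear in its right argument and ``$\varepsilon$-linear'' in its left, and $\dashv_i$ the other way round. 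If $a*_i b=\sum_\xi(h_\xi\otimes f_\xi)\otimes_H d_\xi$ then both $a\vdash_i b$ and $a\dashv_i b$ are $\Bbbk$-linear combinations of $H$-translates of the $d_\xi$, and \eqref{eq:0-identities} follows at once: in $(a\dashv_i b)\vdash_j c$ and $(a\vdash_i b)\vdash_j c$ the outer $\vdash_j$ eats by $\varepsilon$ any $H$-action sitting on its left argument, so both expressions equal $\sum_\xi\varepsilon(h_\xi)\varepsilon(f_\xi)\,(d_\xi\vdash_j c)$; the second relation in \eqref{eq:0-identities} is symmetric. Hence $C^{(0)}$ is a $0$-dialgebra.

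For \eqref{eq:Dot-identities} I would use the hypothesis that $C$ is a $\Var$-pseudo-algebra, i.e.\ $f^{*}(x_1,\dots,x_n)=0$ for every $f\in S$ with $\deg f=n$. The link is the linear map $\pi_k\colon H^{\otimes n}\otimes_H C\to C$, $1\le k\le n$, given by $\pi_k\big((h_1\otimes\cdots\otimes h_n)\otimes_H c\big)=\big(\prod_{j\ne k}\varepsilon(h_j)\big)\,h_k c$; it is well defined on the balanced tensor product because $(\varepsilon^{\otimes(k-1)}\otimes\mathrm{id}_H\otimes\varepsilon^{\otimes(n-k)})\Delta^{n-1}=\mathrm{id}_H$. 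The heart of the argument is the following lemma: for every poly-linear $\Omega$-monomial $u=u(x_1,\dots,x_n)$, every $k$ and all $a_1,\dots,a_n\in C$,
\[
\big[\,u(x_1,\dots,\dot x_k,\dots,x_n)\,\big](a_1,\dots,a_n)=\pi_k\big(u^{(*)}(a_1,\dots,a_n)\big),
\]
where the left-hand side is the value in $C^{(0)}$ of the $\Omega^{(2)}$-monomial obtained from $u$ by the dot-substitution procedure described in the definition of a di-$\Var$-algebra. Granting the lemma, linearity gives $f(x_1,\dots,\dot x_k,\dots,x_n)=\pi_k\circ f^{*}=0$ in $C^{(0)}$ for every $f\in S$ and every $k$, which together with \eqref{eq:0-identities} is precisely the defining list of identities of a di-$\Var$-algebra, and the proposition follows.

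I would prove the lemma by induction on the length of $u$, in tandem with the auxiliary claim that $u^{\vdash}$ (every vertex label turned into $\vdash$) evaluates to the image of $u^{(*)}$ under the map $\varepsilon$-annihilating every tensor leg except the one carried by the rightmost leaf of $u$, and symmetrically for $u^{\dashv}$ and the leftmost leaf. In the inductive step $u=v\circ_i w$ with $x_k$ occurring in $v$, the dot-substitution gives $u(x_1,\dots,\dot x_k,\dots)=v(\dots,\dot x_k,\dots)\dashv_i w^{\dashv}$; substituting the inductive expressions for the two factors, expanding $\dashv_i$ via \eqref{eq:Pseudo2Dialg} together with the relations of the first paragraph, and invoking the explicit composition rule of the operad $C$ from \cite{BDK2001} — each composition inserts comultiplications on the ``older'' $H$-legs, which the counits hidden in $\pi_k$ absorb through $\varepsilon(h_{(1)})h_{(2)}=h$ — one identifies the result with $\pi_k(u^{(*)})$; the case $x_k\in w$ is symmetric under $\vdash\leftrightarrow\dashv$ and left $\leftrightarrow$ right, and the relabelling of legs produced by the permutation $\sigma$ in $u^{(*)}=(\sigma\otimes_H\mathrm{id})u^{*}$ is routine bookkeeping. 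I expect this induction to be the only genuine obstacle: one must run the operadic composition in $C$ explicitly and check, leg by leg, that every comultiplication it introduces is cancelled by a counit — in effect the ``di'' shadow of the computation behind Proposition~\ref{prop:EilenbergDouble} — whereas \eqref{eq:0-identities} and the reduction through $\pi_k$ are immediate by comparison.
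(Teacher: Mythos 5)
Your proof follows essentially the same route as the paper: the identities \eqref{eq:0-identities} are checked by the same direct computation with the pseudo-product, and the dotted identities \eqref{eq:Dot-identities} are deduced from the vanishing of $f^{(*)}$ via a counit map, which is exactly the formula the paper invokes as ``straightforward to check (c.f.\ \cite{Kol2008})'' and for which you supply the well-definedness of $\pi_k$ and the inductive verification. Note only that your placement of the counits ($\varepsilon$ on every leg except the emphasized $k$-th one, consistent with \eqref{eq:Pseudo2Dialg} and with the computation in Proposition~\ref{prop:Dialg2Current}) is the correct one, whereas the formula displayed in the paper's proof reads as if $\varepsilon$ were applied to the $k$-th leg alone (apparently a typo, and as written not even well defined on $H^{\otimes n}\otimes_H C$), so your version is, if anything, the more carefully stated.
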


\begin{proof}
It is enough to check that \eqref{eq:0-identities} and \eqref{eq:Dot-identities} hold on $C^{(0)}$.
Indeed, if
$a*_i b = \sum\limits_\xi (h_\xi\otimes f_\xi)\otimes_H d_\xi $,
$d_\xi *_j c = \sum\limits_\eta (h'_\eta\otimes f'_\eta)\otimes_H e_\eta $
then
\[
(a\vdash_i b)*_j c = \sum\limits_\eta \left(\sum\limits_{\xi} \varepsilon(h_\xi)f_\xi d_\xi\right)*_j c
 = \sum\limits_{\eta,\xi} (\varepsilon(h_\xi)f_\xi h'_\eta \otimes f'_\eta)\otimes_H e_\eta.
\]
Hence,
\[
(a\vdash_i b) \vdash_j c = \sum\limits_{\eta,\xi} \varepsilon(h_\xi f_\xi h'_\eta) f'_\eta e_\eta.
\]
 On the other hand,
\[
(a\dashv_i b)*_j c = \sum\limits_\eta \left(\sum\limits_{\xi} h_\xi \varepsilon(f_\xi) d_\xi\right)*_j c
 = \sum\limits_{\eta,\xi} (h_\xi \varepsilon(f_\xi) h'_\eta \otimes f'_\eta)\otimes_H e_\eta,
\]
so
$(a\vdash_i b) \vdash_j c = (a\dashv_i b) \vdash_j c$ for all $a,b,c\in C$.
The second identity in \eqref{eq:0-identities} can be proved in the same way.

Consider a poly-linear identity $f\in S$. This is straightforward to check (c.f. \cite{Kol2008})
that if
\[
 f^{(*)}(a_1, \dots, a_n) = \sum\limits_\xi (h_{1\xi}\otimes \dots \otimes h_{n\xi })\otimes _H c_\xi
\]
then
$f(a_1, \dots, \dot a_k, \dots, a_n) =
 \sum\limits_\xi h_{1\xi} \dots \varepsilon(h_{k\xi}) \dots h_{n\xi } c_\xi$ in $C^{(0)}$.
It is clear that if $f^{(*)}$ vanishes in $C$ then $C^{(0)}$ satisfies the identity
\eqref{eq:Dot-identities}.
\end{proof}

In particular, if $B$ is a $\Var $-algebra then
$(\mathrm{Cur}\, B)^{(0)}$ is a di-$\Var $-algebra.

\begin{prop}\label{prop:Dialg2Current}
If $H$ contains a nonzero element $T$ such that $\varepsilon(T)=0$ then
every di-$\Var $-algebra $A$ embeds into
$(\mathrm{Cur}\,\hat A)^{(0)}$.
\end{prop}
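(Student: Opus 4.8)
The plan is to realise the target di-$\Var$-algebra through the pseudo-algebra machinery and then write down an explicit map. Since $A$ is a di-$\Var$-algebra it satisfies \eqref{eq:0-identities} and \eqref{eq:Dot-identities}, so the $\Omega$-algebra $\hat A=\bar A\oplus A$ defined by \eqref{eq:EilenbergDialg} (with $x\circ_i y=0$ for $x,y\in A$) belongs to $\Var$; this is the dialgebra case $\perp_i\equiv0$ of the construction preceding Proposition~\ref{prop:EilenbergDouble} (see \cite{Pozh2009}) and is proved by the same argument. Hence $\mathrm{Cur}\,\hat A=H\otimes\hat A$ is a $\Var$-pseudo-algebra over $H$, and by Proposition~\ref{prop:Pseudo2Dialg} the di-algebra $(\mathrm{Cur}\,\hat A)^{(0)}$ is a di-$\Var$-algebra. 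It remains to embed $A$ into it as an $\Omega^{(2)}$-algebra.

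I would take
\[
\iota\colon A\longrightarrow(\mathrm{Cur}\,\hat A)^{(0)},\qquad
\iota(a)=1\otimes\bar a+T\otimes a,
\]
where $\bar a\in\bar A$ is the class of $a$ and the second occurrence of $a$ is taken in the copy $A\subseteq\hat A$. The idea behind this choice: the summand $1\otimes\bar a$ already reproduces both operations of $A$ \emph{modulo} the ideal $A_0$, since $\overline{a\vdash_i b}=\overline{a\dashv_i b}$ in $\bar A$; the correction term $T\otimes a$ lives in the complementary summand $H\otimes A$, repairs this discrepancy, and makes $\iota$ injective. Note that the naive map $a\mapsto 1\otimes\bar a$ is an $\Omega^{(2)}$-homomorphism but is injective only when $A_0=0$, so an element $T$ with $T\ne0$, $\varepsilon(T)=0$ is exactly what the argument needs.

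To verify that $\iota$ is a homomorphism, recall that in $\mathrm{Cur}\,\hat A$ one has $(f\otimes u)*_i(g\otimes v)=(f\otimes g)\otimes_H\bigl(1\otimes(u\circ_i v)\bigr)$, whence by \eqref{eq:Pseudo2Dialg}
\[
(f\otimes u)\vdash_i(g\otimes v)=\varepsilon(f)\bigl(g\otimes(u\circ_i v)\bigr),\qquad
(f\otimes u)\dashv_i(g\otimes v)=\varepsilon(g)\bigl(f\otimes(u\circ_i v)\bigr).
\]
Expanding $\iota(a)\vdash_i\iota(b)$ by bilinearity gives four summands; the two with left argument $T\otimes a$ vanish because $\varepsilon(T)=0$, while $(1\otimes\bar a)\vdash_i(1\otimes\bar b)=1\otimes(\bar a\circ_i\bar b)=1\otimes\overline{a\vdash_i b}$ and $(1\otimes\bar a)\vdash_i(T\otimes b)=T\otimes(\bar a\circ_i b)=T\otimes(a\vdash_i b)$ by \eqref{eq:EilenbergDialg}, so the sum equals $\iota(a\vdash_i b)$. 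Symmetrically, in $\iota(a)\dashv_i\iota(b)$ the two summands with right argument $T\otimes b$ vanish, and the survivors $(1\otimes\bar a)\dashv_i(1\otimes\bar b)=1\otimes\overline{a\vdash_i b}=1\otimes\overline{a\dashv_i b}$ and $(T\otimes a)\dashv_i(1\otimes\bar b)=T\otimes(a\circ_i\bar b)=T\otimes(a\dashv_i b)$ sum to $\iota(a\dashv_i b)$. Finally $\iota$ is injective, since the $H\otimes A$-component of $\iota(a)$ equals $T\otimes a$ and $T\ne0$.

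The argument involves no real obstacle: the only points requiring care are the bookkeeping of the two summands of $\hat A$ and of the counit values, and the recognition that the hypothesis on $T$ is forced by the need for injectivity; the entire identity content reduces to $\overline{a\vdash_i b}=\overline{a\dashv_i b}$ together with $\varepsilon(T)=0$.
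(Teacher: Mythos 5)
Your proposal is correct and follows essentially the same route as the paper: the same embedding $\iota(a)=1\otimes\bar a+T\otimes a$ into $\mathrm{Cur}\,\hat A$, with the homomorphism property checked by expanding the pseudo-product and using $\overline{a\vdash_i b}=\overline{a\dashv_i b}$ together with $\varepsilon(T)=0$, and injectivity read off from the $T\otimes a$ component. You merely make explicit two points the paper leaves implicit (that $\hat A\in\Var$ via the dialgebra case of Proposition~\ref{prop:EilenbergDouble}, and the injectivity argument), which is fine.
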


\begin{proof}
Recall that $\hat A = \bar A\oplus A$,
$\mathrm{Cur}\, \hat A = H\otimes \hat A$.
Define
\begin{equation}\label{eq:CurrEmbedding}
\iota : A \to  H\otimes \hat A,
\quad
\iota(a) = 1\otimes \bar a + T\otimes a.
\end{equation}
This map is obviously
injective, and
\[
\iota(a)*_i \iota(b)
    = (1\otimes 1)\otimes_H (1\otimes \overline{a\vdash_i b})
 + (T\otimes 1)\otimes _H (1\otimes a\dashv_i b) + (1\otimes T)\otimes _H (1\otimes a\vdash_i b).
\]
Since $\overline{a\vdash_i b}= \overline{a\dashv_i b}$ in $\hat A$,
we have
\[
\begin{gathered}
 \iota(a)\vdash_i \iota(b) =1\otimes \overline{a\vdash_i b} + T\otimes a\vdash_i b= \iota(a\vdash_i b), \\
 \iota(a)\dashv_i \iota(b) =1\otimes \overline{a\dashv_i b} + T\otimes a\dashv_i b= \iota(a\dashv_i b).
\end{gathered}
\]
\end{proof}

\section{Dendriform di- and trialgebras}

The operad $\Dend$ of associative dendriform algebras is known to be
Koszul dual (see \cite{GinzKapr} for details of Koszul duality)
to the operad $\Dias=\mathcal D_{\mathrm{As}}$
of diassociative algebras.
Since $\Dias \simeq \mathrm{As}\otimes \Perm$
and it was noticed in \cite{Vallette2008} that for $\Perm $
(as well as for $\CommTrias $) the Hadamard product $\otimes $
coincides with the Manin white product $\circ$,
 we have
$\Dend : = (\mathrm{As}\otimes \Perm)^! = \mathrm{As} \bullet \PreLie$,
where $\mathrm{As}^! = \mathrm{As}$, $\PreLie $ is the operad of
pre-Lie algebras which is Koszul dual to $\Perm $,
$\bullet $ stands for the Manin black product of operads
\cite{GinzKapr}.

In general, for a binary operad $\mathcal P$
the {\em successor procedure\/} described in \cite{BBGN2011}
gives rise to what is natural to call
defining identities of di- or tri-$\mathcal P$-dendriform algebras.
In addition, if $\mathcal P$ is quadratic then these $\mathcal P$-dendriform
algebras are dual to the corresponding di- or tri-$\mathcal P^!$-algebras.
In this case, obviously,  $(\mathcal P^!\otimes \Perm)^! = P\bullet \PreLie $
for dialgebras, and
$(\mathcal P^!\otimes \CommTrias)^! = \mathcal P^!\bullet \PostLie$,
where
$\PreLie = \Perm^!$
$\PostLie = \CommTrias^!$.
This is closely related with Proposition \ref{prop:KoszulDual} below.

In terms of identities, we do not need $\mathcal P$ to be quadratic
(in fact, it is easy to generalize the successor procedure even
for algebras with $n$-ary operations,  $n\ge 2$).

Usually, the operations in a dendriform di- or trialgebra are denoted
by $\prec$, $\succ $, and $\cdot $. We will use $\dashv$, $\vdash$,
and $\perp $ instead.

Suppose $\Var $ is a variety of $\Omega $-algebras defined by
a family $S$ of poly-linear identities, as above.

\begin{defn}
A {\em tri-$\Var$-dendriform algebra} is an $\Omega^{(3)}$-algebra satisfying
the identities
\begin{equation}\label{eq:TriDendId}
f^*(x_1,\dots , \dot x_{k_1}, \dots , \dot x_{k_l}, \dots, x_n),
\quad f\in S, \ n=\deg f, \ 1\le k_1<\dots <k_l\le n,
\end{equation}
for all $l=1,\dots, n$,
where
$f^*(x_1,\dots , \dot x_{k_1}, \dots , \dot x_{k_l}, \dots, x_n)$
is obtained from $f$ by means of the following procedure.
\end{defn}

If $u$ is a non-associative monomial, $u=vw$, $H=\{k_1,\dots, k_l\}$,
$\deg v=p$,
$H=H_1\dot\cup H_2$
as above, then
\[
\Phi^*(n)(u^H) = \begin{cases}
 \Phi^*(p)(v^{H_1}) \perp_i  \Phi^*(n-p)(w^{H_2}), & \mbox{if } H_1,H_2\ne \emptyset, \\
 v^{*}\vdash_i \Phi^*(n-p)(w^H), & \mbox{if } H_1=\emptyset, \\
\Phi^*(p)(v^H)\dashv_i w^{*}, & \mbox{if } H_2=\emptyset,
 \end{cases}
\]
where $v^*$  stands for the linear combination of trees obtained when we replace
each label $\circ_j$ in $v$ with $\vdash_j+\dashv_j +\perp_j$).

Now, extend $\Phi^*(n)$ by linearity and set

\[
f^*(x_1,\dots , \dot x_{k_1}, \dots , \dot x_{k_l}, \dots, x_n):=
\sum\limits_{\xi }\alpha_\xi  \Phi^*(n)(u^H_\xi)
\]
for
$f(x_1,\dots, x_n) = \sum\limits_{\xi }\alpha_\xi u_\xi \in \mathcal F(n)$,
$\alpha_\xi \in \Bbbk $,
$H=\{k_1,\dots, k_l\}$.

To get the definition of a di-$\Var $-dendriform algebra, it is enough to
set $x\perp y = 0$ and consider $|H|=1$ only.

Denote by $\dend\mathcal {D}_{\Var }$ and $\dend\mathcal T_{\Var }$
the operads governing di- and tri-$\Var $-dend\-ri\-form algebras.

\begin{prop}\label{prop:KoszulDual}
Suppose $|\Omega |<\infty $.
If $\mathcal P_{\Var} $ is a binary quadratic operad 
with $\mathcal P_{\Var} (1)=\Bbbk $ then
$(\mathcal D_{\Var})^! = \dend\mathcal D_{\Var^!}$ and
$(\mathcal T_{\Var})^! = \dend\mathcal T_{\Var^!}$,
where $\Var ^!$ stands for the class of algebras governed
by the Koszul dual operad $\mathcal P_{\Var }^!$.
\end{prop}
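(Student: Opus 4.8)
The plan is to pass to quadratic presentations, use the compatibility of Koszul duality with the Manin products, and then match the resulting black products with the successor construction. First I would observe that under the stated hypotheses every operad in sight is binary quadratic: since $\mathcal P_{\Var}$ is quadratic and $\mathcal P_{\Var}(1)=\Bbbk$, the defining family $S$ may be chosen to consist of arity-$3$ poly-linear identities in the operations $\Omega$, and then \eqref{eq:0-identities}, \eqref{eq:Dot-identities}, \eqref{eq:tri0-identities}, \eqref{eq:triDot-Identities} and \eqref{eq:TriDendId} are all of arity $3$. Hence $\mathcal D_{\Var}$, $\mathcal T_{\Var}$, $\dend\mathcal D_{\Var^!}$, $\dend\mathcal T_{\Var^!}$ all admit quadratic presentations, with generating $S_2$-modules spanned by $\Omega^{(2)}$ and $\Omega^{(3)}$; the hypothesis $|\Omega|<\infty$ makes these spaces finite-dimensional, so the Koszul dual is given by the usual recipe $(\mathcal F(E)/(R))^!=\mathcal F(E^\vee)/(R^\perp)$ with $E^\vee=E^*\otimes\mathrm{sgn}_2$ and $R^\perp$ the annihilator of $R$ under the canonical pairing $\mathcal F(E^\vee)(3)\otimes\mathcal F(E)(3)\to\Bbbk$.

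Next, since (by \cite{Vallette2008}) the Hadamard product coincides with the Manin white product $\circ$ on $\Perm$ and on $\CommTrias$, we have $\mathcal D_{\Var}=\mathcal P_{\Var}\circ\Perm$ and $\mathcal T_{\Var}=\mathcal P_{\Var}\circ\CommTrias$; as Koszul duality interchanges $\circ$ with the Manin black product $\bullet$ (see \cite{GinzKapr}), this gives $(\mathcal D_{\Var})^!=\mathcal P_{\Var}^!\bullet\Perm^!=\mathcal P_{\Var^!}\bullet\PreLie$ and $(\mathcal T_{\Var})^!=\mathcal P_{\Var^!}\bullet\PostLie$. So the proposition reduces to the two presentation-level identities $\dend\mathcal D_{W}=\mathcal P_{W}\bullet\PreLie$ and $\dend\mathcal T_{W}=\mathcal P_{W}\bullet\PostLie$, valid for an arbitrary binary quadratic variety $W$, which we then apply with $W=\Var^!$.

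To prove these last two identities I would compare presentations. On generators this is routine: $\mathcal P_W\bullet\PreLie$ is generated by $\mathcal P_W(2)\otimes\PreLie(2)$, and since $\PreLie(2)\cong\Bbbk S_2$ each binary operation $\circ_i$ of $W$ gives rise to exactly two operations, which we name $\dashv_i$ and $\vdash_i$ — so the generators match $\Omega^{(2)}$; likewise $\PostLie(2)$ is $3$-dimensional (a copy of the regular plus a copy of the sign representation), so $\circ_i$ splits into $\dashv_i$, $\vdash_i$, $\perp_i$, matching $\Omega^{(3)}$. On relations the claim is that the standard black-product recipe, which produces the relation space from $R_W$ together with the single pre-Lie relation (right-symmetry of the associator), yields exactly the starred identities $f^*(\dots,\dot x_k,\dots)$, $f\in S$, with $\perp=0$ — the defining family of $\dend\mathcal D_W$; and, in the trialgebra case, that the two post-Lie relations similarly produce the full family \eqref{eq:TriDendId}. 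In other words, the map $\Phi^*$ of the successor procedure is precisely the composite of maps defining the black-product relations. (Alternatively, one can avoid the black-product formalism and argue directly, using the pairing on $\mathcal F^{(2)}(3)$ that makes $\{\dashv_i,\vdash_i\}$ self-dual up to tree signs: show that the annihilator of the $\Perm$-type relations \eqref{eq:0-identities} is the $\PreLie$-type relation space, while the annihilator of the dotted identities $f(\dots,\dot x_k,\dots)$, $f\in S$, in the complementary subspace is spanned by the starred identities $g^*(\dots,\dot x_k,\dots)$ of the Koszul-dual relations $g\in S^\perp$.)

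The step I expect to be the main obstacle is this last point — the orthogonality of the dotted and starred families of identities under the Koszul-duality pairing (and, similarly, of the $\Perm$-/$\CommTrias$-type versus $\PreLie$-/$\PostLie$-type relations). It is a combinatorial statement about how $\Phi$ and $\Phi^*$ interact with the operadic pairing, and I would split it into two steps. First the purely numerical half: $\dim R_{\mathcal D_{\Var}}+\dim R_{\dend\mathcal D_{\Var^!}}=\dim\mathcal F^{(2)}(3)$ (and its trialgebra analogue), which follows from the Koszul-duality dimension identities for $\Var$ and $\Var^!$ together with the branching $\circ_i\mapsto\{\dashv_i,\vdash_i\}$ (resp. $\{\dashv_i,\vdash_i,\perp_i\}$). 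Given this, it suffices to prove one inclusion, namely $\langle f(\dots\dot x_k\dots),\,g^*(\dots\dot x_k\dots)\rangle=0$ for all $f\in S$, $g\in S^\perp$ and all placements of the dots; this reduces to pairing a single monomial of $f$ against a single monomial of $g$ and tracking the labels: a vertex that $\Phi$ marks by the single label $\dashv_i$ or $\vdash_i$ must be met at the corresponding vertex of the $\Phi^*$-image by the sum $\vdash_i+\dashv_i+\perp_i$, and the tree-sign and $\perp=0$ bookkeeping then cancels the surviving cross terms against the corresponding term of the Koszul-dual pairing on $\mathcal F(3)$. Once this is established one has $R_{\dend\mathcal D_{\Var^!}}=R_{\mathcal D_{\Var}}^\perp$, hence $(\mathcal D_{\Var})^!=\dend\mathcal D_{\Var^!}$, and the trialgebra statement follows verbatim with $\CommTrias$, $\PostLie$ and $\Omega^{(3)}$ replacing $\Perm$, $\PreLie$ and $\Omega^{(2)}$.
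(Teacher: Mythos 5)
Your operative argument --- the parenthetical ``direct'' route that you then elaborate in your last paragraph --- is essentially the paper's own proof: the authors also work entirely with the quadratic presentations, pair the $\Phi$-images of the tri-$\Var$ relations against the $\Phi^*$-images of $R^\perp$ (the pairing vanishes when the two emphasized subsets differ, reduces to $\langle f,g\rangle$ when they coincide, and the $0$-identities are orthogonal to the starred images because at the relevant vertex a starred image carries the full sum $\vdash_i+\dashv_i+\perp_i$ with equal coefficients), and then close the argument with the dimension count $\dim R^{(3)}=6N^2+7\dim R$, $\dim (R^\perp)^{(3*)}=21N^2-7\dim R$, $\dim\mathcal F^{(3)}(3)=27N^2$. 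So that half of your proposal is correct and coincides with the paper, modulo two details you only gesture at: the relation module of $\mathcal T_{\Var}$ (resp.\ $\mathcal D_{\Var}$) contains the $0$-identities \eqref{eq:tri0-identities} (resp.\ \eqref{eq:0-identities}), whose orthogonality to the starred family must be checked separately, and your ``numerical half'' requires the explicit computation of $\dim O^{(3)}$ ($=6N^2$ in the paper) together with the linear independence of the dotted/starred families over the seven (resp.\ three) choices of $H$ --- this is where $|\Omega|<\infty$ and the normalization of $E$ are actually used.

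By contrast, your headline route through Manin products is genuinely different from the paper's, but as written it has a gap at its crux: the identities $\dend\mathcal D_{W}=\mathcal P_{W}\bullet\PreLie$ and $\dend\mathcal T_{W}=\mathcal P_{W}\bullet\PostLie$, i.e.\ the statement that the successor procedure realizes the black product, are not a routine comparison of presentations --- they are the main theorem of \cite{BBGN2011}, and your phrase ``the claim is that the standard black-product recipe yields exactly the starred identities'' simply asserts what has to be proved (indeed, proving it amounts to the same orthogonality bookkeeping you defer to the direct route). If you invoke \cite{BBGN2011} for this, together with $(\mathcal P\circ\mathcal Q)^!=\mathcal P^!\bullet\mathcal Q^!$ and the coincidence of the Hadamard and white products for $\Perm$ and $\CommTrias$ from \cite{Vallette2008,GinzKapr}, the Manin-product route becomes a legitimate and more conceptual derivation; the paper instead chooses the self-contained presentation-level computation, which is exactly your fallback argument.
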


\begin{proof}
 We consider trialgebra case in details since it covers the dialgebra case.

Suppose $\mathcal P_{\Var} = \mathcal P(E,R)$ is a binary quadratic operad,
i.e., a quotient operad of $\mathcal F$, $\mathcal F(2)=E$, with respect to the operad
ideal generated by $S_3$-submodule $R\subseteq \mathcal F(3)$,
see \cite{GinzKapr} for details.

The space $E$ is spanned by $\mu_i: x_1\otimes x_2 \mapsto x_1\circ_i x_2$
and $\mu_i^{(12)}: x_1\otimes x_2 \mapsto x_2\circ_i x_1$, $i\in I$.
Without loss of generality, we may assume that
$\mu_i$, $i\in I$, are linearly independent and
\[
 \mu_k^{(12)} = \sum\limits_{i\in I} \alpha _{ik} \mu_i,
  \quad  k\in I'\subseteq I,\ \alpha_{ik}\in \Bbbk ,
\]
are the only defining identities of $\Var $ of degree two, $|I'|=d\ge 0$
(if $\mathrm{char}\,\Bbbk \ne 2$, these are just commutativity and
anti-commutativity).
Denote by $N=2|I|-d$ the dimension of $E$.

The space $\mathcal F(3)$ can be naturally identified with the induced $S_3$-module
$\Bbbk S_3 \otimes _{\Bbbk S_2} (E\otimes E)$, where $E\otimes E$ is considered as an
$S_2$-module via $(\mu\otimes \nu)^{(12)} = \mu\otimes \nu^{(12)}$, $\mu, \nu\in E$.
Namely, the basis of $\mathcal F(3)$ consists of expressions
\[
\sigma \otimes_{\Bbbk S_2} (\mu\otimes \nu),\quad \sigma \in \{e,(13),(23) \},
 \]
$\mu$ and $\nu$ range over a chosen basis of $E$. Therefore, $\dim \mathcal F(3) = 3N^2$.

In terms of monomials (or binary trees), for example,
$e\otimes_{\Bbbk S_2}(\mu_i\otimes \mu_j)$ corresponds to
$(x_1\circ_j x_2)\circ_i x_3$,
$e\otimes_{\Bbbk S_2}(\mu_i^{(12)}\otimes \mu_j)$ to
$x_3\circ_i (x_1\circ_j x_2)$. A permutation $\sigma \in S_3$ in the first tensor
factor permutes variables, e.g.,
$(13)\otimes_{\Bbbk S_2} (\mu_i^{(12)}\otimes \mu_j^{(12)})$
corresponds to $x_1\circ_i (x_2\circ_j x_3)$.

Recall that $E^\vee $ denotes the dual space to $E$ considered as
an $S_2$-module with respect to sgn-twisted action:
$\langle \nu^{(12)}, \mu\rangle = -\langle \nu, \mu^{(12)}\rangle $,
$\nu \in E^\vee $, $\mu \in E$.
If $\mathcal F^\vee $ is the free binary operad generated by $E^\vee $
then
$(\mathcal F(3))^\vee \simeq \mathcal F^\vee (3)
=\Bbbk S_3 \otimes_{\Bbbk S_2}(E^\vee\otimes E^\vee) $.

The Koszul-dual operad $\mathcal P_{\Var }^!$
is the quotient of $\mathcal F^\vee$ by the operad ideal
generated by $R^\perp \subset \mathcal F^\vee(3)$, the orthogonal space to~$R$.

By the definition, the operad $\mathcal T_\Var $
governing the variety of tri-$\Var $-algebras
is equal to $\mathcal P(E^{(3)}, R^{(3)})$,
where the initial data $E^{(3)}$, $R^{(3)}$ are defined as follows.
The space $E^{(3)}$ is spanned by
$\mu_i^{*}$, $(\mu_i^{*})^{(12)}$, $i\in I$, $*\in \{\vdash, \dashv, \perp\}$,
with respect to the relations
\[
\begin{gathered}
(\mu_k^{\vdash})^{(12)} = \sum\limits_{i\in I}\alpha _{ik} \mu^{\dashv}_i,
\quad
(\mu_k^{\dashv})^{(12)} = \sum\limits_{i\in I}\alpha _{ik} \mu^{\vdash}_i \\
(\mu_k^{\perp})^{(12)} = \sum\limits_{i\in I}\alpha _{ik} \mu^{\perp}_i,
\quad  k\in I'.
\end{gathered}
\]
The $S_3$-module $R^{(3)}$ is generated by the defining identities of
tri-$\Var$-algebras, i.e.,
\[
R^{(3)} = \{\Phi(3)(f^H)\mid f\in R,\, \emptyset\ne H\subseteq \{1,2,3\} \} \oplus O^{(3)},
\]
and
$O^{(3)}$ is the $S_3$-submodule of $\mathcal F^{(3)}$ generated by
\begin{equation}\label{eq:0Ident_mu}
\begin{gathered}
 \mu_j^\vdash \otimes \mu_i^{\dashv} - \mu_j^{\vdash}\otimes \mu_i^{\vdash}, \quad
 \mu_j^\vdash \otimes \mu_i^{\perp} - \mu_j^{\vdash}\otimes \mu_i^{\vdash}, \\
 (\mu_i^\dashv)^{(12)} \otimes \mu_j^{\vdash} - (\mu_i^{\dashv})^{(12)}\otimes \mu_j^{\dashv}, \quad
 (\mu_i^\dashv)^{(12)} \otimes \mu_j^{\perp} - (\mu_i^{\dashv})^{(12)}\otimes \mu_j^{\perp},\\
i,j\in I.
\end{gathered}
\end{equation}
This is easy to calculate that
$\dim E^{(3)}=3N$, $\dim \mathcal F^{(3)}(3)=27N^2$,
$\dim O^{(3)} = 6N^2$, so $\dim R^{(3)} = 6N^2+7\dim R$.
Denote by
$O^{(3)}_+$ the $S_3$-submodule of $\mathcal F^{(3)}$ generated by
the first summands of all relations from \eqref{eq:0Ident_mu}.

Suppose $f\in \mathcal F(3)$, $g\in \mathcal F^\vee (3)$,
and let $H_1,H_2\subseteq \{1,2,3\}$ be nonempty subsets.
It follows from the definition of $\Phi (3)$ that
 $\langle \Phi(3)(f^{H_1}) , \Phi(3)(g^{H_2}) \rangle =0$ if $H_1\ne H_2$.
For $H_1=H_2=H$, orthogonality of $f$ and $g$ implies
$\langle \Phi(3)(f^{H}) , \Phi(3)(g^{H}) \rangle =0$ as well.
Moreover, for every $f\in \mathcal F(3)$ we have
$\langle \Phi(3)(f^H), O^{(3)}_+ \rangle =0$ since neither of terms
from $O^{(3)}_+$ appears in images of $\Phi(3)$.

This is now easy to see that if $g\in R^\perp \subseteq \mathcal F^{\vee}(3)$
then $\langle f, \Phi^*(3)(g^{H}) \rangle = 0$ for every $f\in  R^{(3)}$.
Hence,
\[
 (R^\perp)^{(3*)}:=\{ \Phi^*(3)(g^{H}) \mid g\in R^\perp,\, \emptyset\ne H\subseteq \{1,2,3\}\}\subseteq
 (R^{(3)})^\perp.
\]
On the other hand, $\dim R^{\perp} = 3N^2-\dim R$, so $\dim (R^\perp)^{(3*)} = 21N^2-7\dim R$.
Therefore,
$\dim (R^\perp)^{(3*)} + \dim R^{(3)} = 27N^2 $ and $ (R^\perp)^{(3*)}=(R^{(3)})^\perp$.
It remains to recall that, by definition,
$\mathrm{dend}\mathcal T_{\Var} = \mathcal P(E^{(3)}, (R^\perp)^{(3*)})$.
\end{proof}

\section{Embedding into Rota---Baxter algebras}

Suppose $B$ is an $\Omega $-algebra.
A linear map $R:B\to B$ is called a
Rota---Baxter operator of weight $\lambda \in \Bbbk $  if
\begin{equation}\label{eq:RB-Operator}
R(x)\circ_i R(y) = R(x\circ_i R(y) + R(x)\circ_i y + \lambda x\circ_i y)
\end{equation}
for all $x,y\in B$, $i\in I$.

Let $A$ be an $\Omega^{(3)}$-algebra.
Consider the isomorphic copy $A'$ of the underlying linear space
$A$ (assume $a\in A$ is in the one-to-one correspondence with $a'\in A'$),
and define the following $\Omega $-algebra structure
on the space $\hat A = A\oplus A'$:
\begin{equation}\label{eq:Hat-Defn}
\begin{gathered}
a\circ_i b = a\vdash _i b + a\dashv_i b + a\perp_i b,
\quad
a\circ_i b' = (a\vdash _i b)',
\\
a'\circ_i b = (a\dashv_i b)',
\quad
a'\circ_i b' = (a\perp_i b)',
\end{gathered}
\end{equation}
for $a,b\in A$, $i\in I$.

\begin{lem}\label{lem:RB-hat}
Given a scalar $\lambda \in \Bbbk $, the linear map $R:\hat A \to \hat A$
defined by $R(a')=\lambda a$, $R(a)=-\lambda a$ ($a\in A$)
is a Rota---Baxter operator of weight $\lambda $
on the $\Omega $-algebra $\hat A$.
\end{lem}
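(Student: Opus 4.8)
The plan is to verify the Rota---Baxter identity \eqref{eq:RB-Operator} directly, but organized around a vector-space decomposition of $\hat A$ into two subalgebras rather than brute force. Since $\lambda=0$ forces $R=0$, and then both sides of \eqref{eq:RB-Operator} vanish, I may assume $\lambda\ne 0$ throughout.

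First I would record two structural facts about $\hat A$ equipped with the operations \eqref{eq:Hat-Defn}. On the one hand, $A\subseteq\hat A$ is a subalgebra, since $a\circ_i b=a\vdash_i b+a\dashv_i b+a\perp_i b\in A$; moreover $R(a)=-\lambda a$, so $R$ restricted to $A$ equals $-\lambda\,\mathrm{id}_A$, and since $R(a')=\lambda a\in A$ as well, we have $R(\hat A)\subseteq A$. On the other hand, the ``diagonal'' subspace $\Delta=\{\,a+a'\mid a\in A\,\}$ is also a subalgebra: for $a,b\in A$, writing $c=a\circ_i b=a\vdash_i b+a\dashv_i b+a\perp_i b\in A$, one gets from \eqref{eq:Hat-Defn} that $(a+a')\circ_i(b+b')=c+(a\vdash_i b)'+(a\dashv_i b)'+(a\perp_i b)'=c+c'\in\Delta$. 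Finally $\hat A=A\oplus\Delta$ as linear spaces, because $A\cap\Delta=0$ and any $a+c'\in\hat A$ is $(a-c)+(c+c')$ with $a-c\in A$, $c+c'\in\Delta$.

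The key point is then that $R=-\lambda P$, where $P\colon\hat A\to\hat A$ is the projection onto $A$ along $\Delta$: indeed $P(a)=a$ and, since $a'=-a+(a+a')$ with $a+a'\in\ker P$, also $P(a')=-a$, whence $-\lambda P(a)=-\lambda a=R(a)$ and $-\lambda P(a')=\lambda a=R(a')$. So Lemma~\ref{lem:RB-hat} reduces to the following standard fact, which I would include with its one-line proof: if an $\Omega$-algebra $B$ decomposes as $B=B_1\oplus B_2$ with $B_1,B_2$ subalgebras and $P$ is the projection onto $B_1$, then $-\lambda P$ satisfies \eqref{eq:RB-Operator}. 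Namely, for $x=x_1+x_2$, $y=y_1+y_2$ with $x_k,y_k\in B_k$, one has $(-\lambda P)(x)\circ_i(-\lambda P)(y)=\lambda^2(x_1\circ_i y_1)$, while the argument of $R$ on the right-hand side of \eqref{eq:RB-Operator} simplifies to $\lambda\bigl(x_2\circ_i y_2-x_1\circ_i y_1\bigr)$, and applying $-\lambda P$ to it — using $x_1\circ_i y_1\in B_1$ and $x_2\circ_i y_2\in B_2$ — again yields $\lambda^2(x_1\circ_i y_1)$.

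I do not expect a genuine obstacle; the only thing to get right is the choice of complement to $A$. The naive candidate $A'$ is merely an ideal of $\hat A$, not the relevant one: the decomposition exhibiting $R$ as a scaled projection is $\hat A=A\oplus\Delta$ with the diagonal $\Delta$, and the opposite signs in $R(a)=-\lambda a$ versus $R(a')=\lambda a$ are precisely what makes $\ker P=\Delta$ rather than $A'$. (Alternatively one could bypass the reformulation and split $x,y\in\hat A$ into their $A$- and $A'$-components, check \eqref{eq:RB-Operator} in the four resulting cases, and track in each case which summand every product lands in; this is routine but less transparent.)
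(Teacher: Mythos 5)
Your proof is correct, but it is organized quite differently from the paper's. The paper proves Lemma~\ref{lem:RB-hat} by brute force: it expands $R(a+b')\circ_i R(x+y')$ and $R\bigl((a+b')\circ_i R(x+y') + R(a+b')\circ_i (x+y') + \lambda (a+b')\circ_i (x+y')\bigr)$ term by term using the multiplication table \eqref{eq:Hat-Defn} and checks that the two sums of $\vdash_i,\dashv_i,\perp_i$-terms coincide — essentially the ``less transparent'' four-case computation you mention at the end. You instead identify $R$ as $-\lambda P$, where $P$ is the projection onto the subalgebra $A$ along the diagonal subalgebra $\Delta=\{a+a'\mid a\in A\}$, and reduce the lemma to the standard fact that a splitting of an $\Omega$-algebra into two subalgebras yields, via the scaled projection, a Rota--Baxter operator of weight $\lambda$; your verifications that $A$ and $\Delta$ are subalgebras, that $\hat A=A\oplus\Delta$, that $P(a')=-a$, and the one-line check of the general fact (the argument of $R$ collapsing to $\lambda(x_2\circ_i y_2-x_1\circ_i y_1)$) are all correct, and the argument even makes the preliminary $\lambda=0$ case split unnecessary, since nothing is divided by $\lambda$. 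What your route buys is conceptual clarity — it explains \emph{why} the signs in $R(a)=-\lambda a$, $R(a')=\lambda a$ are the right ones (they make $\ker P$ the diagonal rather than $A'$) and it isolates a reusable general principle; what the paper's route buys is that it is entirely self-contained at the level of the operations $\vdash_i,\dashv_i,\perp_i$ and requires no auxiliary decomposition or lemma. Both establish the statement in full generality (for an arbitrary $\Omega^{(3)}$-algebra $A$), so there is no gap.
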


\begin{proof}
It is enough to check the relation \eqref{eq:RB-Operator}.
Straightforward computation shows
\begin{multline}\nonumber
R(a+b')\circ_i R(x+y')
=\lambda^2(-a+b)\circ_i (-x+y) \\
= \lambda^2 (
 a\vdash_i x + a\dashv_i x + a\perp_i x
-a\vdash_i y - a\dashv_i y - a\perp_i y \\
-b\vdash_i x - b\dashv_i x - b\perp_i x
+b\vdash_i y + b\dashv_i y + b\perp_i y).
\end{multline}
On the other hand,
\begin{multline}\nonumber
R((a+b')\circ_i R(x+y') + R(a+b')\circ_i (x+y') + \lambda (a+b')\circ (x+y')) \\
=
\lambda R((a+b')\circ_i (-x+y) + (-a+b)\circ_i (x+y') + (a+b')\circ (x+y')) \\
=
\lambda R(
-a\vdash_i x - a\dashv_i x - a\perp_i x
+a\vdash_i y + a\dashv_i y + a\perp_i y \\
- (b\dashv_i x)' + (b\dashv_i y)'
-a\vdash_i x - a\dashv_i x - a\perp_i x
+ b\vdash_i x + b\dashv_i x + b\perp_i x \\
- (a\vdash_i y)' + (b\vdash_i y)'
+a\vdash_i x + a\dashv_i x + a\perp_i x
+ (a\vdash_i y)' + (b\dashv_i x)'
+ (b\perp_i y)'
)    \\
=
\lambda^2
(
-a\vdash_i y - a\dashv_i y - a\perp_i y
 + b\dashv_i y
+a\vdash_i x + a\dashv_i x \\
+ a\perp_i x
- b\vdash_i x - b\dashv_i x - b\perp_i x
 + b\vdash_i y
+ b\perp_i y
).
\end{multline}
\end{proof}

\begin{lem}\label{lem:RB-dialg}
Let $A$ be a di-$\Var$-dendriform algebra. Then the
map $R:\hat A\to \hat A$ defined by
$R(a')=a$, $R(a)=0$ is a Rota---Baxter operator of weight $\lambda =0$
on $\hat A$.
\end{lem}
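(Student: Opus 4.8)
The plan is to verify the Rota---Baxter relation \eqref{eq:RB-Operator} with $\lambda=0$ directly on $\hat A$, in the same spirit as the proof of Lemma~\ref{lem:RB-hat}. The point is that both the operations on $\hat A$ from \eqref{eq:Hat-Defn} (in the dialgebra case $\perp_i\equiv 0$, so $a\circ_i b=a\vdash_i b+a\dashv_i b$ and $a'\circ_i b'=0$) and the operator $R$ respect the decomposition $\hat A=A\oplus A'$: the product of an $A$-element with an $A'$-element lies in $A'$, the product of two $A$-elements lies in $A$, the product of two $A'$-elements vanishes, while $R$ annihilates $A$ and sends $A'$ isomorphically onto $A$.

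First I would fix $i\in I$ and take general elements $u=a+b'$, $v=x+y'$ of $\hat A$ with $a,b,x,y\in A$. Then $R(u)=b$ and $R(v)=y$, so
\[
R(u)\circ_i R(v)=b\circ_i y=b\vdash_i y+b\dashv_i y\in A .
\]
On the other side, expanding $u\circ_i R(v)+R(u)\circ_i v$ by \eqref{eq:Hat-Defn} gives an element of $\hat A$ whose $A$-component is $a\vdash_i y+a\dashv_i y+b\vdash_i x+b\dashv_i x$ and whose $A'$-component is $(b\dashv_i y)'+(b\vdash_i y)'$. Since $R$ kills the $A$-component and returns $b\dashv_i y+b\vdash_i y$ on the $A'$-component, we get $R\bigl(u\circ_i R(v)+R(u)\circ_i v\bigr)=b\vdash_i y+b\dashv_i y$, which matches the left-hand side; as $\lambda=0$ there is no further term. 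Hence $R$ is a Rota---Baxter operator of weight $0$.

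I do not expect a genuine obstacle here: the verification is pure bookkeeping of the two summands, and, exactly as in Lemma~\ref{lem:RB-hat}, it uses no identities of $A$ at all. The di-$\Var$-dendriform hypothesis is needed only for the companion statement that $\hat A$ lies in $\Var$; together with the present lemma and the embedding $a\mapsto a'$ — which one checks carries the dendriform operations of $A$ to the Aguiar operations $a'\circ_i R(b')=(a\dashv_i b)'$ and $R(a')\circ_i b'=(a\vdash_i b)'$ on $\hat A$ — this yields the desired embedding of $A$ into a weight-zero Rota---Baxter algebra from $\Var$.
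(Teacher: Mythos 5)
Your proof is correct and follows essentially the same route as the paper, which proves Lemma~\ref{lem:RB-dialg} by the same direct verification of \eqref{eq:RB-Operator} used for Lemma~\ref{lem:RB-hat}, specialized to $\perp_i\equiv 0$ and weight $\lambda=0$; your bookkeeping of the $A$- and $A'$-components via \eqref{eq:Hat-Defn} matches that computation. Your observation that no identities of $A$ are used here, the dendriform hypothesis entering only when one shows $\hat A\in\Var$, is also consistent with the paper's treatment.
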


The proof is completely analogous to the previous one.

The following statement is well-known (c.f. \cite{Aguiar00, Fard02, Uchino}),
but we will state its proof for readers' convenience.

\begin{prop}\label{prop:RB_to_dendriform}
Let $B$ be an $\Omega $-algebra with a Rota---Baxter
operator $R$ of weight $\lambda \ne 0$. Assume $B$
belongs to $\Var $.
Then the same linear space $B$ considered as
$\Omega^{(3)}$-algebra with respect to the operations
\begin{equation}\label{eq:RB-DendOperations}
x\vdash_i y = \frac{1}{\lambda }R(x)\circ_i y, \quad
x\dashv_i y = \frac{1}{\lambda } x\circ_i R(y), \quad
x\perp_i y = x\circ_i y.
\end{equation}
is a tri-$\Var$-dendriform algebra.
\end{prop}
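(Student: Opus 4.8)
The plan is to reduce the verification of the dendriform identities \eqref{eq:TriDendId} in $B$ (equipped with the operations \eqref{eq:RB-DendOperations}) to the already-established identities satisfied by $B$ as a member of $\Var$, using the Rota--Baxter relation \eqref{eq:RB-Operator} as the sole bridge. The key conceptual observation is that the three operations $\vdash_i,\dashv_i,\perp_i$ on $B$ can be produced from a single honest $\Var$-algebra structure by the recipe of Lemma~\ref{lem:RB-hat}. Concretely, I would consider the map $\pi\colon B\to\hat B$, but run in the reverse direction from the lemma: one wants to realize $B$ with operations \eqref{eq:RB-DendOperations} as an $\Omega^{(3)}$-subquotient of a $\Var$-algebra. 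The cleanest route is to use the linear map $\varphi\colon B\to B\oplus B$, $\varphi(x)=(R(x),x)$ (equivalently $x\mapsto R(x)+x'$ in the notation $\hat B=B\oplus B'$), and to check that under $\varphi$ the operation $x\circ_i y$ on $B$ transports correctly.

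First I would record the basic identity: for $x,y\in B$ and the operations of \eqref{eq:RB-DendOperations},
\[
\lambda(x\vdash_i y)+\lambda(x\dashv_i y)+\lambda(x\perp_i y)
= R(x)\circ_i y + x\circ_i R(y) + \lambda\, x\circ_i y = R(x)\circ_i R(y)\cdot\tfrac{1}{1}\ ,
\]
wait—more precisely $R(x\vdash_i y+x\dashv_i y+x\perp_i y)$ computed via linearity gives $R(R(x)\circ_i y + x\circ_i R(y)+\lambda x\circ_i y)/\lambda \cdot \lambda = R(x)\circ_i R(y)$ by \eqref{eq:RB-Operator}. So $R$ applied to the ``total'' dendriform product $x\circ_i y$ (meaning $x\vdash_i y+x\dashv_i y+x\perp_i y$) equals $R(x)\circ_i R(y)$; and trivially $x\perp_i y = x\circ_i y$. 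This is exactly the compatibility needed so that $\varphi$ intertwines the $\hat B$-product of Lemma~\ref{lem:RB-hat} (built from the dendriform operations \eqref{eq:RB-DendOperations}) with the original product of $B$ in the copy $B$, and with a ``shifted'' product in the copy $B'$. Then I would invoke Proposition~\ref{prop:EilenbergDouble}: since $B\in\Var$, the algebra $\hat B$ of Lemma~\ref{lem:RB-hat} — which is $\hat A$ for $A=B$-with-operations-\eqref{eq:RB-DendOperations} — lies in $\Var$ precisely when $B$-with-those-operations is a tri-$\Var$-dendriform algebra... no: Proposition~\ref{prop:EilenbergDouble} concerns tri-$\Var$-algebras, not dendriform ones.

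So the honest argument is as follows. Take the evaluation of a defining identity $f\in S$ with some emphasized variables: $f^*(b_1,\dots,\dot b_{k_1},\dots,\dot b_{k_l},\dots,b_n)$ computed via the $\Phi^*$-procedure using operations \eqref{eq:RB-DendOperations}. Unwinding $\Phi^*$ recursively, each $\vdash_i$ at a ``$\dashv/\vdash$-boundary'' contributes a factor $\frac1\lambda R(\cdot)$ applied to the non-emphasized side, each $\dashv_i$ symmetrically, and the emphasized subtrees are assembled using $v^* = \vdash_j+\dashv_j+\perp_j$, which by the first computation above is (after applying $R$ where needed) exactly $R(v)\circ_j R(w)$-type reassembly. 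The upshot, which is the technical heart, is the claim:
\[
\lambda\cdot f^*(b_1,\dots,\dot b_{k_1},\dots,\dot b_{k_l},\dots,b_n)
= R\!\left(f(R(b_1),\dots,R(b_{k_1-1}),b_{k_1},\dots,b_{k_l},R(b_{k_l+1}),\dots)\right),
\]
i.e. the dendriform evaluation with emphasized slots $k_1<\dots<k_l$ equals $\frac1\lambda R$ applied to the ordinary $\Var$-evaluation of $f$ in $B$ where the non-emphasized arguments are hit by $R$ and the emphasized ones are left alone. This is proved by induction on $n$, the inductive step being exactly one application of \eqref{eq:RB-Operator} at the root vertex, split into the three cases of the $\Phi^*$-definition. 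Granting this claim, since $f\in S$ and $B\in\Var$ the inner expression $f(R(b_1),\dots)$ vanishes identically in $B$, hence its $R$-image vanishes, hence $f^*(\dots)=0$; as this holds for every $f\in S$ and every choice of emphasized set, $B$ with \eqref{eq:RB-DendOperations} satisfies \eqref{eq:TriDendId}, i.e. it is a tri-$\Var$-dendriform algebra.

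**Where the difficulty lies.** The routine parts are the base case and bookkeeping; the real obstacle is formulating and proving the inductive claim displayed above with the $\Phi^*$-procedure's three-way branching handled uniformly. The subtlety is that in the ``$H_1=\emptyset$'' branch one uses $v^*\vdash_i\Phi^*(w^H)$ where $v^*$ is the \emph{full} sum $\vdash+\dashv+\perp$ of subtrees of $v$ — and one must check that $\lambda\cdot(v^* \text{ evaluated})= R(v(R(\cdot)))$ via a \emph{sub}-induction (the ``all-non-emphasized'' case, $l=0$), which is really the statement that $x\mapsto R(x)$ carries the total dendriform product to the $B$-product, i.e. iterated use of \eqref{eq:RB-Operator}. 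Keeping the $\lambda$-powers straight across nested $R$'s (each internal vertex on a non-emphasized path carries one $\frac1\lambda$ and one $R$, which telescope correctly only because $R$ is Rota--Baxter of the \emph{same} weight $\lambda\neq0$) is the one place where $\lambda\neq0$ is essential and where an off-by-one in the power of $\lambda$ is the likeliest error. Once that lemma-within-the-proof is pinned down, everything else is immediate.
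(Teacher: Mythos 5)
Your overall plan---prove by induction over the monomial structure, following the three-way branching of $\Phi^*$, that the dendriform evaluation of a monomial with emphasized set $H$ equals a $\lambda$-rescaled evaluation in $B$ with $R$ inserted at the non-emphasized slots, and then conclude $f^*=0$ for $f\in S$ because $B\in\Var$---is exactly the argument the paper gives. But the two formulas you single out as ``the technical heart'' are false as stated, and the induction you describe will not close with them. Already for $n=2$, $H=\{1\}$, your main claim reads $\lambda(x_1\dashv_i x_2)=R\bigl(x_1\circ_i R(x_2)\bigr)$, whereas by \eqref{eq:RB-DendOperations} the left-hand side is $x_1\circ_i R(x_2)$ with no outer $R$. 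The correct statement carries no outer application of $R$ and the scalar $\lambda^{-(n-l)}$ rather than $\lambda^{-1}$:
\[
u^*(x_1,\dots,\dot x_{k_1},\dots,\dot x_{k_l},\dots,x_n)
=\frac{1}{\lambda^{\,n-l}}\,u\bigl(R(x_1),\dots,x_{k_1},\dots,x_{k_l},\dots,R(x_n)\bigr).
\]
With your version the inductive step breaks at the $\perp$-branch: writing $a,b$ for the $R$-decorated evaluations of the two subtrees, your formula would force $R(a)\circ_i R(b)=\lambda R(a\circ_i b)$, which is not the Rota---Baxter identity \eqref{eq:RB-Operator}.

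Your auxiliary ``all-non-emphasized'' lemma is likewise transposed: it is not $\lambda\,v^*(\dots)=R\bigl(v(R(\cdot),\dots)\bigr)$ but
\[
R\bigl(u^*(x_1,\dots,x_n)\bigr)=\frac{1}{\lambda^{\,n-1}}\,u\bigl(R(x_1),\dots,R(x_n)\bigr),\qquad n\ge 2,
\]
i.e.\ $R$ is applied to the total dendriform evaluation, and this is the only place where \eqref{eq:RB-Operator} is used (once per internal vertex); it then feeds the $\vdash$- and $\dashv$-branches of the main induction, while the $\perp$-branch needs nothing beyond $x\perp_i y=x\circ_i y$---so ``one application of the Rota---Baxter relation at the root'' is also not an accurate description of the step. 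Your concluding deduction ($f$ vanishes on $B\in\Var$, hence so does the rescaled $R$-decorated expression, hence $f^*=0$ for all $f\in S$ and all $H$) is sound and survives the correction unchanged, so the gap is confined to the statement of the key lemma; but since that lemma is the entire content of the proof, it must be fixed as above before the argument stands.
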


\begin{proof}
Let $u=u(x_1,\dots, x_n) \in \mathcal F(n)$ be a poly-linear
$\Omega $-monomial.
The claim follows from the following relation in $B$:
\begin{equation}\label{eq:RB-monomial}
u^*(x_1,\dots, \dot x_{k_1}, \dots, \dot x_{k_l}, \dots , x_n)
=
\frac{1}{\lambda^{n-l}}u(R(x_1), \dots, x_{k_1}, \dots, x_{k_l}, \dots, R(x_n)),
\end{equation}
i.e., in order to get a value of an $\Omega^{(3)}$-monomial in $\hat A$
we have to replace every non-emphasized variable $x_i$
($i\notin H=\{k_1,\dots, k_l \}$) with $\frac{1}{\lambda}R(x_i)$.

Relation \eqref{eq:RB-monomial} is clear for $n=1,2$.
In order to apply induction on $n$, we have to start with the case when
$H=\emptyset $. Recall that $u^*(x_1,\dots, x_n)$
stands for the expression obtained from $u$ by means of replacing
each $\circ_i$ with $\vdash_i + \dashv_i + \perp_i$.
Then
\begin{equation}\label{eq:RB-star-monomial}
R(u^*(x_1,\dots, x_n)) = \frac{1}{\lambda^{n-1}}u(R(x_1), \dots, R(x_n)),
\quad n\ge 2,
\end{equation}
in $\hat A$. Indeed, for $n=2$ we have exactly the Rota---Baxter relation.
If $u = v\circ_i w$, $v=v(x_1,\dots, x_p)$, $w=w(x_{p+1}, \dots, x_n)$, then, by induction,
\begin{multline}\nonumber
R(u^*)= R(v^* \vdash_i w^* + v^* \dashv_i w^* + v^* \perp_i w^*)\\
= \frac{1}{\lambda }R(R(v^*)\circ_i w^* + v^* \circ_i R(w^*)
           +  \lambda v^* \perp_i w^*)
=\frac{1}{\lambda } R(v^*)\circ_i R(w^*)  \\
= \frac{1}{\lambda } \frac{1}{\lambda^{p-1} }v(R(x_1), \dots, R(x_p))\circ_i
  \frac{1}{\lambda^{n-p-1} } w(R(x_{p+1}), \dots, R(x_n)) \\
= \frac{1}{\lambda^{n-1} } u(R(x_1), \dots, R(x_n)).
\end{multline}

Now, let us finish proving \eqref{eq:RB-monomial}.
If $u = v\circ_i w$, $\deg v =p$, $H=H_1\dot\cup H_2$
then there are three cases: (a) $H_1,H_2\ne \emptyset $;
(b) $H_1=\emptyset $; (c) $H_2=\emptyset $.

In the case (a),
$u^*(x_1,\dots , \dot x_{k_1}, \dots , \dot x_{k_l}, \dots, x_n)
 = \Phi^*(n) (u^H) = \Phi^*(p)(v^{H_1}) \perp_i \Phi^*(n) (w^{H_2})$,
and it remains to apply inductive assumption and the definition of
$\perp_i$ from \eqref{eq:RB-DendOperations}.

In the case (b),
$ \Phi^*(n) (u^H) = v^*\vdash_i \Phi^*(n-p) (w^{H})$,
so for any $a_1, \dots, a_n \in B$ we
can apply
\eqref{eq:RB-star-monomial}
to get
\begin{multline}\nonumber
 [\Phi^*(n) (u^H)] (a_1,\dots, a_n)=
\frac{1}{\lambda } R(v^*(a_1,\dots, a_p))
 \circ_i [\Phi^*(n-p) (w^{H})](a_{p+1}, \dots, a_n) \\
 =
\frac{1}{\lambda ^{p}} v(R(a_1), \dots, R(a_p))\circ_i
\frac{1}{\lambda ^{n-p-l}} w(R(a_{p+1}, \dots , a_{k_1}, \dots, a_{k_l},
\dots , R(a_{n})) \\
=
\frac{1}{\lambda^{n-l}}u(R(a_{1}, \dots , a_{k_1}, \dots, a_{k_l}, \dots , R(a_{n})).
\end{multline}

The case (c) is completely analogous.
\end{proof}

\begin{remark}
Following \cite{Fard02}, the dendriform operations on a
Rota---Baxter algebra should be defined as
\begin{equation}\label{eq:RB-DendAF}
x\vdash_i y = R(x)\circ_i y,\quad
x\dashv_i y = x\circ_iR(y),\quad
x\perp_i y = \lambda x\circ_i y.
\end{equation}
It is easy to see that for $\lambda \ne 0$
one should just re-scale these binary operations to get
\eqref{eq:RB-DendOperations}.
\end{remark}

\begin{prop}[\cite{Aguiar00, Uchino}]\label{prop:RB_to_dendriform2}
Let $B$ be an $\Omega $-algebra with a Rota---Baxter
operator $R$ of weight $\lambda = 0$. Assume $B$
belongs to $\Var $.
Then the same linear space $B$ considered as
$\Omega^{(2)}$-algebra with respect to
$x\vdash_i y = R(x)\circ_i y$,
$x\dashv_i y = x\circ_iR(y)$
is a di-$\Var $-dendriform algebra.
\end{prop}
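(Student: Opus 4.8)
The plan is to follow the proof of Proposition~\ref{prop:RB_to_dendriform} verbatim, with the single simplification that here the third operation $\perp_i$ is identically zero. Concretely, we must check that $B$, equipped with $x\vdash_i y = R(x)\circ_i y$ and $x\dashv_i y = x\circ_i R(y)$, satisfies the identities \eqref{eq:TriDendId} in the degenerate form $\perp_i\equiv 0$, $|H|=1$, since these are precisely the defining identities of a di-$\Var$-dendriform algebra.

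First I would record the weight-zero analogue of the auxiliary relation from the proof of Proposition~\ref{prop:RB_to_dendriform}. For a poly-linear $\Omega$-monomial $u=u(x_1,\dots,x_n)\in\mathcal F(n)$, let $u^*$ denote the linear combination of $\Omega^{(2)}$-monomials obtained by replacing each $\circ_i$ in $u$ by $\vdash_i+\dashv_i$. The claim is that
\[
R\bigl(u^*(x_1,\dots,x_n)\bigr)=u\bigl(R(x_1),\dots,R(x_n)\bigr),\qquad n\ge 1,
\]
in $B$. For $n=1$ this is trivial, and for $n=2$ it is exactly \eqref{eq:RB-Operator} with $\lambda=0$. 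For the inductive step, write $u=v\circ_i w$ with $\deg v=p$; then, evaluated in $B$, $u^*=R(v^*)\circ_i w^*+v^*\circ_i R(w^*)$, so $R(u^*)=R\bigl(R(v^*)\circ_i w^*+v^*\circ_i R(w^*)\bigr)=R(v^*)\circ_i R(w^*)$ by the weight-zero Rota--Baxter identity, and the inductive hypothesis applied to $v^*$ and $w^*$ completes the computation.

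Next I would prove the weight-zero analogue of the key monomial identity~\eqref{eq:RB-monomial}: for every poly-linear $u\in\mathcal F(n)$ and every index $k\in\{1,\dots,n\}$,
\[
u^*(x_1,\dots,\dot x_k,\dots,x_n)=u\bigl(R(x_1),\dots,x_k,\dots,R(x_n)\bigr),
\]
i.e. evaluating the $\Omega^{(2)}$-monomial $\Phi^*(n)(u^{\{k\}})$ amounts to substituting $R(x_j)$ for each non-emphasized variable $x_j$. This goes by induction on $n$ exactly as in Proposition~\ref{prop:RB_to_dendriform}: writing $u=v\circ_i w$ with $\deg v=p$, the emphasized leaf $x_k$ lies either in $w$, so $\Phi^*(n)(u^{\{k\}})=v^*\vdash_i\Phi^*(n-p)(w^{\{k\}})$ (case (b)), or in $v$, giving $\Phi^*(p)(v^{\{k\}})\dashv_i w^*$ (case (c)). In case (b) the product evaluates to $R\bigl(v^*(x_1,\dots,x_p)\bigr)\circ_i\bigl[\Phi^*(n-p)(w^{\{k\}})\bigr](x_{p+1},\dots,x_n)$; applying the previous claim to the left factor and the inductive hypothesis to the right factor reassembles $u\bigl(R(x_1),\dots,x_k,\dots,R(x_n)\bigr)$. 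Case (c) is symmetric.

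Finally, for each defining identity $f\in S$ of $\Var$, extending the last identity by linearity over the monomials occurring in $f$ gives $f^*(x_1,\dots,\dot x_k,\dots,x_n)=f\bigl(R(x_1),\dots,x_k,\dots,R(x_n)\bigr)$ in $B$ for every $k$. Since $B\in\Var$ and $f$ is poly-linear, the right-hand side vanishes when we substitute $R(b_j)$ for $x_j$ ($j\ne k$) and $b_k$ for $x_k$; hence $f^*(b_1,\dots,\dot b_k,\dots,b_n)=0$ for all $b_1,\dots,b_n\in B$, which is precisely the identity~\eqref{eq:TriDendId} in the form $\perp_i\equiv 0$, $|H|=1$. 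Thus $B$ is a di-$\Var$-dendriform algebra. There is no real obstacle here beyond the bookkeeping; the only points needing care are the base cases of the two inductions and tracking which branch of the tree carries the emphasized leaf — just as in the weight-nonzero case, but now without any powers of $\lambda$ to rescale.
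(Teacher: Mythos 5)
Your proposal is correct and is exactly the argument the paper intends: the paper's own ``proof'' of Proposition~\ref{prop:RB_to_dendriform2} is just the remark that it is analogous to Proposition~\ref{prop:RB_to_dendriform}, and what you write is precisely that analogue, with $\perp_i\equiv 0$, $|H|=1$, and the weight-zero Rota---Baxter identity replacing \eqref{eq:RB-star-monomial} and \eqref{eq:RB-monomial} without the powers of $\lambda$. No gaps; the inductive bookkeeping and the final substitution step are handled correctly.
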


The proof is analogous to the proof of the previous statement.

\begin{thm}\label{thm:DoubleEmbedding}
The following statements are equivalent:
\begin{enumerate}
\item $A$ is a tri-$\Var $-dendriform algebra;
\item $\hat A$ belongs to $\Var $.
\end{enumerate}
\end{thm}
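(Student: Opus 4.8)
The plan is to prove the equivalence by translating the structure of $\hat A$ back and forth with the tri-$\Var$-dendriform identities via a single monomial-evaluation lemma, in the spirit of Proposition~\ref{prop:RB_to_dendriform} and the proof of Proposition~\ref{prop:EilenbergDouble}. The central technical fact I would isolate first is the following: for a poly-linear $\Omega$-monomial $u=u(x_1,\dots,x_n)\in\mathcal F(n)$ and a nonempty subset $H=\{k_1<\dots<k_l\}\subseteq\{1,\dots,n\}$, the value in $\hat A$ of the $\Omega^{(3)}$-expression $\Phi^*(n)(u^H)$ evaluated on $a_1,\dots,a_n\in A$ equals
\[
[\Phi^*(n)(u^H)](a_1,\dots,a_n) = u(a_1,\dots,a_n)' \quad\text{(primed, i.e. in }A'\subseteq\hat A),
\]
provided the emphasized variables are fed in as $a_{k_j}\in A$ and all non-emphasized variables $a_i$ ($i\notin H$) are fed in as the primed elements $a_i'\in A'$; and in the edge case $H=\emptyset$ (replacing each $\circ_i$ by $\vdash_i+\dashv_i+\perp_i$, which is exactly the operation $\circ_i$ of $\hat A$ restricted appropriately) one gets $u^*(a_1',\dots,a_n') = u(a_1,\dots,a_n)'$ and $u^*(a_1,\dots,a_n)=u(a_1,\dots,a_n)$ read in $\hat A$. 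The point is that the defining rule \eqref{eq:Hat-Defn} of $\hat A$ is built precisely so that a product of elements, all of which are primed except possibly those in a prescribed ``active'' set, lands in $A'$ and carries the $A$-product of the underlying elements — and the three cases in the definition of $\Phi^*$ match the three cases $(H_1,H_2\neq\emptyset)$, $(H_1=\emptyset)$, $(H_2=\emptyset)$ of \eqref{eq:Hat-Defn} line by line.

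With that lemma in hand, both implications are short. For $(1)\Rightarrow(2)$: pick any defining identity $f=\sum_\xi\alpha_\xi u_\xi\in S$ of $\Var$; we must show $f$ vanishes on $\hat A$. An arbitrary element of $\hat A$ is $a_i=b_i+c_i'$ with $b_i,c_i\in A$; by poly-linearity it suffices to evaluate $f$ on pure inputs, each of which is either some $b\in A$ or some $c'\in A'$. Given a choice of which slots are primed and which are not, let $H$ be the set of \emph{non-primed} slots. If $H=\{1,\dots,n\}$, the lemma's $H=\emptyset$-type edge case (the all-primed complement is empty) shows $f$ evaluates to $f(b_1,\dots,b_n)'$... wait, rather: if all inputs are \emph{primed}, we get $f(c_1,\dots,c_n)'=0$ since $A'\cong A$ as a space and $f$ holds in... no — $A$ need not be in $\Var$. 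The correct reading: if all inputs are primed we use $u^*(c_1',\dots,c_n')=f^*(c_1,\dots,c_n)'$, and $f^*$ is (up to the $\perp$-conventions) the defining identity of $\Var^!$-dendriform... The honest statement is: evaluating $f$ on a mixed tuple produces, by the lemma, $\pm\,[f^*(c_1,\dots,\dot c_{k_1},\dots)](\ldots)$ living in $A'$, which is zero exactly because $A$ satisfies the tri-$\Var$-dendriform identity \eqref{eq:TriDendId} for the subset $H$ (and the pure case of no primed inputs reduces to the same identities \eqref{eq:tri0-identities} built into the definition). Conversely $(2)\Rightarrow(1)$: assume $\hat A\in\Var$; then for each $f\in S$ and each nonempty $H$, feed $f$ the tuple that is primed off $H$ and unprimed on $H$; the lemma identifies this value with $[f^*(\dots\dot x_{k_j}\dots)](\dots)$ sitting in the copy $A'$, which vanishes because $\hat A$ satisfies $f$; since $A'\to A$ is a linear isomorphism, the dendriform identity \eqref{eq:TriDendId} holds on $A$. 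One also checks that $\hat A\in\Var$ forces, via degree-two instances, nothing extra — the $\perp_i$-conventions and \eqref{eq:Hat-Defn} already encode that there are no constraints on $A$ in degree two beyond linearity, matching that \eqref{eq:TriDendId} has no degree-two members (identities in $S$ have degree $>1$, and degree-two identities of $\Var$ get absorbed into the definition of the operations).

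The main obstacle is proving the monomial lemma cleanly by induction on $n$: one must verify that the recursive definition of $\Phi^*(n)(u^H)$ through the split $H=H_1\dot\cup H_2$ is compatible with the four-case multiplication table \eqref{eq:Hat-Defn}, and in particular that the ``fully-primed subtree'' expressions $v^*$, $w^*$ (where every $\circ_j$ becomes $\vdash_j+\dashv_j+\perp_j$) evaluate on primed inputs to the primed underlying product $v(\cdot)'$, resp. $w(\cdot)'$ — this is the analogue of the auxiliary identity \eqref{eq:RB-star-monomial} in the proof of Proposition~\ref{prop:RB_to_dendriform}, and it itself needs a small sub-induction. Once the bookkeeping of ``which subtree is all-primed, which leaves are active'' is set up correctly, the three cases of $\Phi^*$ collapse onto the three nonzero lines of \eqref{eq:Hat-Defn} and the induction goes through; I would present the lemma as a displayed claim, prove it by structural induction on the tree $u$, and then deduce both directions of the theorem in a few lines each, remarking that the $\lambda\neq 0$ / weight considerations of Lemma~\ref{lem:RB-hat} are not needed here since the statement concerns only membership of $\hat A$ in $\Var$.
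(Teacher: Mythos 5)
Your overall architecture is the same as the paper's (a monomial-evaluation lemma for $\hat A$ proved by structural induction, then both implications read off from it), but the central lemma is stated with the priming convention inverted, and as stated it is false. In $\hat A$ one has $a'\circ_i b=(a\dashv_i b)'$ and $a\circ_i b'=(a\vdash_i b)'$, so it is the \emph{emphasized} variables of $u^H$ that must be fed in as primed elements and the non-emphasized ones as unprimed, exactly as in the paper's Lemma~\ref{lem:SomeEmphasized}. With your convention already the degree-two case fails: for $u=x_1\circ_i x_2$ and $H=\{1\}$ one has $\Phi^*(2)(u^H)=x_1\dashv_i x_2$, while your prescription evaluates $a_1\circ_i a_2'=(a_1\vdash_i a_2)'$; so the three cases of $\Phi^*$ do \emph{not} match the lines of \eqref{eq:Hat-Defn} under your assignment, and the proposed induction cannot close. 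Likewise your all-primed edge case is wrong: $u$ evaluated on $a_1',\dots,a_n'$ gives the primed value of $u$ with every $\circ_i$ replaced by $\perp_i$ (this is just the $H=\{1,\dots,n\}$ instance of the correct lemma), not $u^*$.

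The second genuine gap is the all-unprimed case in $(1)\Rightarrow(2)$. You dismiss it as "reducing to \eqref{eq:tri0-identities}", but those are identities of tri-$\Var$-\emph{algebras}, are not assumed for a dendriform algebra, and play no role here. What is actually needed is the separate expansion $u(a_1,\dots,a_n)=\sum_{\emptyset\ne H}\,[\Phi^*(n)(u^H)](a_1,\dots,a_n)$ for $a_i\in A$ (the paper's Lemma~\ref{lem:NoEmphasized}, which requires its own induction, since $u^*$ must be regrouped into the sum of the $\Phi^*(u^H)$), so that $f\in S$ evaluated on an all-unprimed tuple becomes the sum over all nonempty $H$ of the dendriform identities \eqref{eq:TriDendId} and hence vanishes. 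Your knowing only "$u^*(a_1,\dots,a_n)=u(a_1,\dots,a_n)$ in $\hat A$" does not by itself give this. Once both points are repaired, your direct specialization argument for $(2)\Rightarrow(1)$ (feed the tuple primed exactly on $H$ and read the identity in $A'$) does work and is a legitimate alternative to the paper's route, which instead composes $\iota(a)=a'$ with Lemma~\ref{lem:RB-hat} and Proposition~\ref{prop:RB_to_dendriform}.
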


\begin{proof}
(1) Assume $A$ is a tri-$\Var $-dendriform algebra,
and let $S$ be the set of defining identities of $\Var $.
We have to check that every $f\in S$ holds on $\hat A$.

First, let us compute a monomial in $\hat A=A\oplus A'$ when all
its arguments belong to the first summand.

\begin{lem}\label{lem:NoEmphasized}
Suppose $u=u(x_1,\dots, x_n)\in \mathcal F(n)$
is a poly-linear $\Omega$-monomial of degree~$n$.
Then in the $\Omega $-algebra $\hat A$ we have
\begin{equation}\label{eq:NoEmphasized}
u(a_1,\dots, a_n) =
 \sum\limits_{H} \Phi^*(n)(u^H)(a_1,\dots, a_n), \quad a_i\in A,
\end{equation}
where $H$ ranges over all nonempty subsets of $\{1,\dots, n\}$.
\end{lem}

\begin{proof}
By the definition of multiplication in $\hat A$,
$u(a_1,\dots, a_n) = u^*(a_1,\dots, a_n)$,
where $u^*$ means the same as in the definition of $\Phi^*(n)$.
In particular, for $n=1,2$ the statement is clear.
 Proceed by induction on $n=\deg u$.
Assume
$u=v\circ _i w$, and, without loss of generality,
$v=v(x_1,\dots, x_p)$, $w=w(x_{p+1}, \dots, x_n)$.
Then
\begin{multline}\label{eq:MonomialExpand}
u(a_1,\dots, a_n)
=
v^*(a_1,\dots, a_p)\vdash_i
 \left (\sum\limits_{H_2} \Phi^*(n-p)(w^{H_2})(a_{p+1},\dots, a_n) \right ) \\
+
\left (\sum\limits_{H_1} \Phi^*(p)(v^{H_1})(a_1,\dots, a_p) \right )
\perp_i
\left (\sum\limits_{H_2} \Phi^*(n-p)(w^{H_2})(a_{p+1},\dots, a_n) \right ) \\
+
\left (\sum\limits_{H_1} \Phi^*(p)(v^{H_1})(a_1,\dots, a_p) \right )
 \dashv_i w^*(a_{p+1}, \dots, a_n),
\end{multline}
where $H_1$ and $H_2$ range over all nonempty subsets of
$\{1,\dots, p\}$ and $\{p+1, \dots, n\}$, respectively.
It is easy to see that the overall sum is exactly the right-hand side
of \eqref{eq:NoEmphasized}: The first (second, third) group of summands in
\eqref{eq:MonomialExpand} corresponds to $H=H_2\subseteq \{p+1,\dots, n\}$,
($H=H_1\cup H_2$, $H=H_1\subseteq \{1,\dots, p\}$, respectively).
\end{proof}

Next, assume that $l>0$ arguments belong to $A'$.

\begin{lem}\label{lem:SomeEmphasized}
Suppose $u=u(x_1,\dots, x_n)\in \mathcal F(n)$
is a poly-linear $\Omega$-monomial of degree~$n$,
$H=\{k_1,\dots, k_l\}$ is a nonempty subset of
$\{1, \dots, n\}$.
Then in the $\Omega $-algebra $\hat A$ we have
\begin{equation}
u(a_1,\dots, a_{k_1}', \dots, a_{k_l}', \dots, a_n)
=
\big( \Phi^*(n)(u^H)(a_1, \dots, a_n) \big )'.
\end{equation}
\end{lem}

\begin{proof}
For $n=1,2$ the statement is clear.
If $u=v\circ_i w$ for some $i\in I$ as above then we have to consider three
natural cases: (a) $H\subseteq \{1,\dots, p\}$; (b) $H\subseteq
 \{p+1, \dots, n\}$; (c) variables with indices from $H$
appear in both $v$ and $w$.

In the case (a), the inductive assumption implies
\begin{multline}\nonumber
u(a_1,\dots, a_{k_1}', \dots, a_{k_l}', \dots, a_n)   \\
 =
v(a_1,\dots, a_{k_1}', \dots, a_{k_l}', \dots, a_p)\dashv_i
w^*(a_{p+1}, \dots, a_n) \\
=
\big(
 \Phi^*(p)(v^H)(a_1,\dots, a_p) \dashv_i w^*(a_{p+1}, \dots, a_n)\big)',
\end{multline}
and it remains to recall the definition
of $\Phi^*(n)$. Case (b) is analogous.

In the case (c), $H=H_1\dot\cup H_2$ as above and
\begin{multline}\nonumber
u(a_1,\dots, a_{k_1}', \dots, a_{k_l}', \dots, a_n) \\
=
 \Phi^*(p)(v^{H_1})(a_1,\dots, a_p)\perp_i
 \Phi^*(n-p)(w^{H_2})(a_{p+1},\dots, a_n)
\end{multline}
that proves the claim.
\end{proof}

Finally, suppose $f\in S$ is a poly-linear identity of degree $n$.
Then $\Phi^*(n)(f^H)$ is an identity on the $\Omega^{(3)}$-algebra
$A$, so Lemmas \ref{lem:NoEmphasized} and \ref{lem:SomeEmphasized}
imply $f$ to hold on $\hat A$.

(2) The map $\iota: A\to \hat A$, $\iota(a)=a'$, is an embedding of
the $\Omega^{(3)}$-algebra $A$ into $\hat A$ equipped with operations
\eqref{eq:RB-DendOperations}. By Proposition~\ref{prop:RB_to_dendriform},
$\hat A$ is a tri-$\Var $-dendriform algebra, therefore, so is~$A$.
\end{proof}

\begin{remark}
Since $\lambda \ne 0$, we may conclude that every tri-$\Var$-dendriform algebra
$A$ embeds into a Rota---Baxter algebra $B\in \Var$ of weight $\lambda $
in the sense of Aguiar \cite{Aguiar00} (see \eqref{eq:RB-DendAF}):
It is sufficient to re-scale the product on $B$.
\end{remark}

If $\lambda =0 $ then the simple reduction of
Theorem~\ref{thm:DoubleEmbedding} by means of
Lemma~\ref{lem:RB-dialg} leads to

\begin{thm}\label{thm:DoubleEmbedding-2}
Suppose $A$ is an $\Omega^{(2)}$-algebra, and let
$\hat A$ stands for an $\Omega $-algebra defined by
\eqref{eq:Hat-Defn} with $x\perp_i y\equiv 0$.
Then  the following statements are equivalent:
\begin{enumerate}
\item $A$ is a di-$\Var $-dendriform algebra;
\item $\hat A$ belongs to $\Var $.
\end{enumerate}
\end{thm}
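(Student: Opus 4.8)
The plan is to deduce Theorem~\ref{thm:DoubleEmbedding-2} from Theorem~\ref{thm:DoubleEmbedding} by the same reduction mechanism that was used to pass from Proposition~\ref{prop:RB_to_dendriform} to Proposition~\ref{prop:RB_to_dendriform2}, namely by specializing the weight to $\lambda=0$ and the operation $\perp_i$ to $0$. Given an $\Omega^{(2)}$-algebra $A$, regard it as an $\Omega^{(3)}$-algebra $A_{(3)}$ by setting $x\perp_i y\equiv 0$ for all $i\in I$. Then the $\Omega^{(3)}$-dendriform identities \eqref{eq:TriDendId} for $A_{(3)}$, after all occurrences of $\perp_i$ are deleted and only subsets $H$ with $|H|=1$ survive, are exactly the defining identities of a di-$\Var$-dendriform algebra; this is precisely the remark made right after the definition of a tri-$\Var$-dendriform algebra (``To get the definition of a di-$\Var$-dendriform algebra, it is enough to set $x\perp y=0$ and consider $|H|=1$ only''). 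So $A$ is a di-$\Var$-dendriform algebra if and only if $A_{(3)}$ is a tri-$\Var$-dendriform algebra.

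Next I would check that the two ``double'' constructions agree: the algebra $\hat A$ built from $A$ via \eqref{eq:Hat-Defn} with $x\perp_i y\equiv 0$ is literally the algebra $\widehat{A_{(3)}}$ obtained by applying \eqref{eq:Hat-Defn} to $A_{(3)}$, since the only change is that the terms $a\perp_i b$ in the formula for $a\circ_i b$ and the terms $(a\perp_i b)'$ in the formula for $a'\circ_i b'$ all vanish. Thus the left and right sides of the equivalence we must prove coincide, respectively, with ``$A_{(3)}$ is a tri-$\Var$-dendriform algebra'' and ``$\widehat{A_{(3)}}$ belongs to $\Var$'', and the equivalence follows directly from Theorem~\ref{thm:DoubleEmbedding} applied to $A_{(3)}$.

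One subtle point I would address explicitly: Theorem~\ref{thm:DoubleEmbedding}(2)$\Rightarrow$(1) was proved via Proposition~\ref{prop:RB_to_dendriform}, whose statement requires $\lambda\neq0$, and here we are in weight zero. However the weight-zero analogue is available: by Lemma~\ref{lem:RB-dialg}, when $A$ is a di-$\Var$-dendriform algebra (equivalently, when $\hat A\in\Var$, once the direction (2)$\Rightarrow$(1) of the theorem under the guise of Lemma~\ref{lem:RB-dialg} is in place), the map $R(a')=a$, $R(a)=0$ is a Rota---Baxter operator of weight $0$ on $\hat A$, and Proposition~\ref{prop:RB_to_dendriform2} then shows that $\hat A$ with $x\vdash_i y=R(x)\circ_i y$, $x\dashv_i y=x\circ_i R(y)$ is a di-$\Var$-dendriform algebra; since $\iota(a)=a'$ is an embedding of the $\Omega^{(2)}$-algebra $A$ into this di-$\Var$-dendriform structure (the computation $\iota(a)\vdash_i\iota(b)=R(a')\circ_i b'=a\circ_i b'=(a\vdash_i b)'=\iota(a\vdash_i b)$ and similarly for $\dashv_i$), $A$ is itself a di-$\Var$-dendriform algebra. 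So both directions go through without ever invoking a nonzero weight. The converse direction (1)$\Rightarrow$(2) is the dialgebra shadow of part (1) of the proof of Theorem~\ref{thm:DoubleEmbedding}: run Lemmas~\ref{lem:NoEmphasized} and~\ref{lem:SomeEmphasized} with $\perp_i\equiv0$, noting that with this specialization Lemma~\ref{lem:NoEmphasized} reduces to $u(a_1,\dots,a_n)=\sum_{|H|=1}\Phi^*(n)(u^H)(a_1,\dots,a_n)$, which is exactly what is needed to verify the di-$\Var$-dendriform identities force every $f\in S$ to hold on $\hat A$.

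The main obstacle is essentially bookkeeping rather than mathematics: one must verify carefully that deleting $\perp_i$ throughout the tri-successor procedure $\Phi^*$ and restricting to singleton $H$ yields precisely the di-successor procedure, i.e.\ that the two ``$\perp\mapsto0$'' specializations (on the level of defining identities, and on the level of the double $\hat A$) are compatible and commute with the equivalences. Once that is in place, the theorem is a corollary of Theorem~\ref{thm:DoubleEmbedding} together with Lemma~\ref{lem:RB-dialg} and Proposition~\ref{prop:RB_to_dendriform2}, and the argument can be written in a few lines.
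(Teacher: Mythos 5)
Your proof is correct and follows essentially the same route as the paper, which dispatches this theorem in one line as ``the simple reduction of Theorem~\ref{thm:DoubleEmbedding} by means of Lemma~\ref{lem:RB-dialg}''; you merely flesh out that reduction, the only substantive check being the one you flag, namely that when $\perp_i\equiv 0$ every tri-dendriform identity with $|H|\ge 2$ vanishes identically (a $\perp$ appears at the least common ancestor of the emphasized leaves), so a di-$\Var$-dendriform algebra is exactly a tri-$\Var$-dendriform algebra with $\perp_i\equiv 0$. Incidentally, your worry about $\lambda\ne 0$ in the direction (2)$\Rightarrow$(1) is moot on your black-box route, since Theorem~\ref{thm:DoubleEmbedding} applies verbatim to $A_{(3)}$ with any internal weight $\lambda\ne 0$, while the weight-zero argument via Lemma~\ref{lem:RB-dialg} and Proposition~\ref{prop:RB_to_dendriform2} that you also sketch is precisely the paper's intended reduction.
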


\begin{remark}
It is interesting to note that $A$ is a simple
di-$\Var $-dendriform algebra if and only if
$\hat A$ is a simple Rota---Baxter algebra.
\end{remark}

The standard reasoning
allows to conclude the following.

\begin{cor}[c.f. \cite{Chen11}]\label{cor:Chen}
Every di-$\Var $-dendriform algebra embeds into its
universal enveloping Rota---Baxter $\Var $-algebra of
weight $\lambda = 0$.
\end{cor}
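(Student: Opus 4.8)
The plan is to reduce this to Theorem~\ref{thm:DoubleEmbedding-2} by the usual adjunction (free-forgetful) argument, so that all the real work is already done in that theorem. First I would fix the meaning of the universal enveloping object. By Proposition~\ref{prop:RB_to_dendriform2}, assigning to a Rota---Baxter $\Var$-algebra $B$ of weight $0$ the di-$\Var$-dendriform algebra on the same underlying space with operations $x\vdash_i y=R(x)\circ_i y$, $x\dashv_i y=x\circ_i R(y)$ is a functor. Since both its source and its target are categories of algebras over binary operads (equivalently, varieties of linear algebras, possibly with a distinguished linear operator, defined by polylinear identities), this functor has a left adjoint $A\mapsto U(A)$; by definition $U(A)$ is the universal enveloping Rota---Baxter $\Var$-algebra of $A$ of weight $0$, equipped with a canonical di-$\Var$-dendriform morphism $\eta_A\colon A\to U(A)$ through which every di-$\Var$-dendriform morphism from $A$ to (the dendriform algebra associated with) a Rota---Baxter $\Var$-algebra $B$ of weight $0$ factors uniquely by a Rota---Baxter $\Var$-algebra morphism $U(A)\to B$. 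The existence of $U(A)$ is a general fact about varieties and needs no new computation.

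Next I would exhibit one concrete Rota---Baxter $\Var$-algebra of weight $0$ into which $A$ visibly embeds as a di-$\Var$-dendriform algebra. Take $\hat A$ as in \eqref{eq:Hat-Defn} with $x\perp_i y\equiv 0$. By Theorem~\ref{thm:DoubleEmbedding-2}, $\hat A$ belongs to $\Var$; by Lemma~\ref{lem:RB-dialg}, $R(a')=a$, $R(a)=0$ defines a Rota---Baxter operator of weight $0$ on $\hat A$; so $(\hat A,R)$ is a Rota---Baxter $\Var$-algebra of weight $0$, and by Proposition~\ref{prop:RB_to_dendriform2} it carries the induced di-$\Var$-dendriform operations $x\vdash_i y=R(x)\circ_i y$, $x\dashv_i y=x\circ_i R(y)$. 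The map $\iota\colon A\to\hat A$, $\iota(a)=a'$, is then a di-$\Var$-dendriform morphism: from \eqref{eq:Hat-Defn} and the definition of $R$ one computes $\iota(a)\vdash_i\iota(b)=R(a')\circ_i b'=a\circ_i b'=(a\vdash_i b)'=\iota(a\vdash_i b)$ and $\iota(a)\dashv_i\iota(b)=a'\circ_i R(b')=a'\circ_i b=(a\dashv_i b)'=\iota(a\dashv_i b)$. It is injective, being the composite of the fixed linear isomorphism $A\to A'$ with the inclusion $A'\hookrightarrow\hat A$.

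Finally I would apply the universal property of $U(A)$ to this $\iota$: it yields a Rota---Baxter $\Var$-algebra morphism $\varphi\colon U(A)\to\hat A$ with $\varphi\circ\eta_A=\iota$ as maps of underlying spaces. Since $\iota$ is injective, $\eta_A$ is injective, and hence $A$ embeds into its universal enveloping Rota---Baxter $\Var$-algebra of weight $0$. I do not expect any serious obstacle beyond Theorem~\ref{thm:DoubleEmbedding-2} itself; the only things to verify directly are the two short identities showing that $\iota$ respects the induced di-dendriform operations, and --- for completeness --- the existence of the left adjoint $U$, which is guaranteed by the general construction of free algebras in a variety of $\Omega$-algebras with a distinguished linear operator satisfying polylinear relations.
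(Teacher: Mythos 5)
Your proof is correct and is exactly the ``standard reasoning'' the paper invokes: the concrete weight-zero Rota---Baxter $\Var$-algebra $\hat A$ from Theorem~\ref{thm:DoubleEmbedding-2} and Lemma~\ref{lem:RB-dialg}, the dendriform embedding $a\mapsto a'$, and the universal property of the enveloping algebra forcing $\eta_A$ to be injective. Nothing in your argument deviates from the paper's intended proof; you have merely written out the details it leaves implicit.
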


\begin{cor}\label{cor:Tri-Embedding}
Every tri-$\Var $-dendriform algebra embeds into its
universal enveloping Rota---Baxter $\Var $-algebra of
weight $\lambda \ne 0$.
\end{cor}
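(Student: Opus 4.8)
The plan is to combine Theorem~\ref{thm:DoubleEmbedding} with Lemma~\ref{lem:RB-hat} and the universal property defining the enveloping algebra. Recall that the universal enveloping Rota---Baxter $\Var$-algebra of weight $\lambda$ of a tri-$\Var$-dendriform algebra $A$ is the initial object $U=U_\lambda(A)$ in the category whose objects are pairs $(B,\varphi)$, where $B$ is a Rota---Baxter $\Var$-algebra of weight $\lambda$ and $\varphi\colon A\to B$ is a morphism of $\Omega^{(3)}$-algebras from $A$ into $B$ equipped with the dendriform operations \eqref{eq:RB-DendOperations}, and whose morphisms $(B,\varphi)\to(B',\varphi')$ are Rota---Baxter $\Var$-algebra maps $\psi\colon B\to B'$ with $\psi\varphi=\varphi'$. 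Existence of $U$ follows by standard universal-algebra arguments (c.f. \cite{FardGuo07}), the canonical map $\eta\colon A\to U$ being a dendriform morphism by construction; ``$A$ embeds into $U$'' means precisely that $\eta$ is injective. Since $\lambda\ne0$, by the remark following Proposition~\ref{prop:RB_to_dendriform} it is immaterial whether one uses \eqref{eq:RB-DendOperations} or Aguiar's convention \eqref{eq:RB-DendAF}, as a rescaling of the product on $B$ identifies the two categories.

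First I would produce an explicit Rota---Baxter $\Var$-algebra that contains $A$ as a dendriform subalgebra. Take $\hat A=A\oplus A'$ with the $\Omega$-operations \eqref{eq:Hat-Defn}. By Theorem~\ref{thm:DoubleEmbedding} we have $\hat A\in\Var$, and by Lemma~\ref{lem:RB-hat} the map $R(a')=\lambda a$, $R(a)=-\lambda a$ is a Rota---Baxter operator of weight $\lambda$ on $\hat A$; hence $\hat A$ is a Rota---Baxter $\Var$-algebra of weight $\lambda$. Equip $\hat A$ with the dendriform operations \eqref{eq:RB-DendOperations}. The computation already performed in the proof of Theorem~\ref{thm:DoubleEmbedding}(2) shows that $\iota\colon A\to\hat A$, $\iota(a)=a'$, is a morphism of $\Omega^{(3)}$-algebras: for instance $\iota(a)\vdash_i\iota(b)=\frac{1}{\lambda}R(a')\circ_i b'=a\circ_i b'=(a\vdash_i b)'=\iota(a\vdash_i b)$, and similarly $\iota(a)\dashv_i\iota(b)=\frac{1}{\lambda}a'\circ_i R(b')=a'\circ_i b=(a\dashv_i b)'$ and $\iota(a)\perp_i\iota(b)=a'\circ_i b'=(a\perp_i b)'$. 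Moreover $\iota$ is obviously injective.

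Then I would invoke universality: the pair $(\hat A,\iota)$ is an object of the category above, so there is a (unique) Rota---Baxter $\Var$-algebra morphism $\psi\colon U\to\hat A$ with $\psi\circ\eta=\iota$. Since $\iota$ is injective, so is $\eta$, which is exactly the claim that $A$ embeds into its universal enveloping Rota---Baxter $\Var$-algebra of weight $\lambda$. The dialgebra statement, Corollary~\ref{cor:Chen}, is obtained by the same argument, replacing Theorem~\ref{thm:DoubleEmbedding} by Theorem~\ref{thm:DoubleEmbedding-2} and Lemma~\ref{lem:RB-hat} by Lemma~\ref{lem:RB-dialg}.

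I do not expect a real obstacle: all the substance has been packed into Theorem~\ref{thm:DoubleEmbedding} and Lemma~\ref{lem:RB-hat}. The only points demanding care are the bookkeeping of weight and scaling conventions for the dendriform operations, and making sure the notion of universal enveloping Rota---Baxter $\Var$-algebra is set up so that the canonical map $\eta$ is the very map whose injectivity we assert; with the formulation above this is immediate from the universal property.
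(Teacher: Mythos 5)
Your argument is exactly the ``standard reasoning'' the paper alludes to (it gives no details): combine Theorem~\ref{thm:DoubleEmbedding} with Lemma~\ref{lem:RB-hat} to get the concrete Rota---Baxter $\Var$-algebra $\hat A$ of weight $\lambda$ containing $A$ via $\iota(a)=a'$ as a dendriform subalgebra, then use the universal property of $U_\lambda(A)$ to factor $\iota$ through the canonical map $\eta$, forcing $\eta$ to be injective. Your verification that $\iota$ respects the operations \eqref{eq:RB-DendOperations}, and your handling of the rescaling between \eqref{eq:RB-DendOperations} and \eqref{eq:RB-DendAF} for $\lambda\ne0$, match the paper's intent, so the proposal is correct and essentially the same proof.
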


\begin{remark}
 All results of this section remain valid for algebras over a commutative ring $K$
if we replace the condition $\lambda \ne 0$ with $\lambda \in K^*$, 
where $K^*$ is the set of invertible elements of $K$.
\end{remark}

\section{Skew trialgebras and Rota---Baxter algebras}

Consider a slightly modified analogue of trialgebras which
we shortly call s-trialgebras.

\begin{defn}
A {\em s-tri-$\Var $-algebra\/} is an
$\Omega^{(3)}$-algebra satisfying the identities
\eqref{eq:0-identities}, \eqref{eq:triDot-Identities}.
\end{defn}

In other words, we exclude the identities
$x_1\dashv_i (x_2 \perp_j x_3) = x_1\dashv_i (x_2 \dashv_j x_3) $,
$(x_1\perp_i x_2) \vdash_j x_3 = (x_1\vdash_i x_2) \vdash_j x_3 $
from the definition of a tri-$\Var$-algebra.

For any $\Omega^{(3)}$-algebra $A$ satisfying the identities \eqref{eq:0-identities}
we can also construct (as in the dialgebra case) the $\Omega$-algebra
$\hat{A}=\bar{A}\oplus A$ as follows (similar to \eqref{eq:EilenbergDialg}):
$\bar{A}=
A/\mathrm{Span}\,\{a\vdash_i b-a\dashv_i b\mid a,b\in A,\, i\in I\}$,
$\bar a\circ_i \bar b=\overline{a\vdash_i b}$,
$\bar a\circ_i b = a\vdash_i b$, $a\circ_i \bar b = a\dashv_i b$,
$a\circ_i b = a\perp_i b$.
An analogue of Proposition \ref{prop:EilenbergDouble} holds for this construction,
i.e., it gives an equivalent definition of a s-tri-$\Var$-algebra.

It turns out that s-tri-$\Var $-algebras are closely related with
$\Gamma $-conformal algebras  introduced in \cite{Kac&Gol}.
These systems appeared as  ''discrete analogues'' of conformal algebras defined over a group $\Gamma$.
From the general point of view, these are pseudo-algebras over the group algebra $H=\Bbbk \Gamma $
considered as a Hopf algebra with respect to canonical coproduct $\Delta (\gamma )=\gamma \otimes \gamma $
 and counit $\varepsilon(\gamma )=1$, $\gamma \in \Gamma $.

We consider the case when $\Gamma =\langle\mathbb{Z},+\rangle$, $H=\Bbbk[t,t^{-1}]$.
If $C$ is a pseudo-algebra over $H$, i.e., a $\mathbb Z$-conformal algebra with operations
$*_i$, $i\in I$, then
for every $a,b\in C$ their pseudo-product $a*_i b\in H^{\otimes 2}\otimes_H C$ can be
presented as
\[
a*_i b = \sum\limits_{n\in \mathbb Z} (t^{-n}\otimes 1) \otimes_H c_n,
\]
where almost all $c_n$ are zero. It is convenient to denote $c_n $ by $a_{(n)} b$ \cite{Kac&Gol}.
These operations provide an equivalent definition of a $\mathbb Z$-conformal algebra:
This is a linear space with bilinear operations $\{ (\cdot_{(n)} \cdot) \mid n \in \mathbb{Z}\}$
and with a linear invertible mapping $t$ such that the following axioms are satisfied:

\begin{itemize}

\item[(Z1)] $a_{(n)}b = 0$ for almost all $n\in\mathbb{Z}$;

\item[(Z2)] $ta _{(n)}b = a_{(n+1)}b$;

\item[(Z3)] $t(a_{(n)}b)=ta_{(n)}tb$.

\end{itemize}

A $\mathbb Z$-conformal algebra $C$ is associative if
$a_{(n)}(b_{(m)}c)=(a_{(n-m)}b)_{(m)}c$
for all $n,m\in \mathbb Z$, $a,b,c\in C$.

Proposition \ref{prop:Pseudo2Dialg} implies that every di-$\Var $-algebra
can be embedded into a current $\mathbb Z$-conformal algebra over
an algebra from $\Var $ (one may consider, e.g., $T=1-t$).
For s-trialgebras, a similar statement holds.

\begin{exmp}
Let $C$ be an associative $\mathbb{Z}$-conformal algebra. Then,
with respect to the operations $\dashv$, $\vdash$ from \eqref{eq:Pseudo2Dialg}
and $a\perp b=a_{(0)}b$, the vector space $C$ is a s-tri-associative algebra.
Let us denote it also by $C^{(0)}$.
\end{exmp}

There is an interesting question: Whether a trialgebra or s-trialgebra $A$
can be embedded into $C^{(0)}$ for some $\mathbb{Z}$-conformal algebra $C$.
We have a positive answer for Loday trialgebras and only for $\mathrm{char}\,\Bbbk =2$.
Then the mapping $\phi$ from \eqref{eq:CurrEmbedding} realizes this embedding
of $A$ into $\mathrm{Cur}\,\hat A$.

\begin{exmp}
A vector space $A$ endowed with two binary operations $\vdash$, $\perp$
belongs to the variety $\sCommTrias$
(skew commutative tri-associative algebras) if both operations are
associative, $\perp$ is commutative and they also satisfy the
following identities:
\[
x_1\vdash (x_2\perp x_3)=(x_1\vdash x_2)\perp x_3, \quad
(x_1\vdash x_2)\vdash x_3=(x_2\vdash x_1)\vdash x_3.
\]

This is easy to derive from the definition that
free $\sCommTrias[X]$ algebra is nothing but $\Perm\langle\Comm[X]\rangle$,
its linear basis consists of words
$$
u_{1}\vdash u_{1}\vdash\ldots \vdash u_{k}\vdash u_{0}, \quad
u_{1}\leqslant\ldots\leqslant u_{k},
$$
where $u_i$ are basic monomials of the polynomial algebra
$\Comm[X]$ with respect to the operation
$\perp$ and some linear ordering $\leqslant $.
\end{exmp}

\begin{exmp}
Let $\langle A,\cdot\rangle$ be an associative algebra with a derivation $d$
such that $d^2=0$. Defining $a\vdash b=d(a)b$, $a\dashv b=ad(b)$ we obtain
s-tri-associative algebra $(A,\vdash,\dashv,\cdot)$.
\end{exmp}

\begin{exmp}
An associative s-trialgebra $A$ with respect to the
operations $[x,y]=x\dashv y-x\vdash y$ and $x\cdot y=x\perp y$ turns into a dialgebra
analogue of a Poisson algebra: The operation $[\cdot, \cdot]$ satisfies the
Leibniz identity and $\cdot $ is associative.
Moreover, the Poisson identity holds:
\[
[xy,z]=x[y,z]+[x,z]y.
\]
In \cite{Trialg01}, the same operations $[\cdot, \cdot]$ and $\cdot$ were considered for ordinary
triassociative algebra (in the sense of Definition \ref{defn:TriVarAlgebra}).
The noncommutative analogue of a Poisson algebra obtained in this way
 satisfies one more identity $[x,yz-zy]=[x,[y,z]]$ which does not appear in the case of
s-tri-associative algebras.
\end{exmp}

Let us define a class of ''skew'' dendriform algebras associated with a variety $\Var $ of algebras.

\begin{defn}
A {\em s-tri-$\Var$-dendriform algebra} is an $\Omega^{(3)}$-algebra satisfying
\[
(x_1 \perp_i x_2) \vdash_j x_3 = 0, \quad x_1\dashv_i (x_2 \perp_j x_3) = 0, \quad i,j\in I,
\]
and the analogues of identities \eqref{eq:TriDendId} with the following difference:
to define $v^*$ one should replace
$\circ_j $ with $\dashv_j+\vdash_j$.
\end{defn}

As above, the class of s-tri-$\Var$-dendriform algebras
is Koszul dual to the class of s-tri-$\Var^!$-algebras.

We can prove the statement about an embedding of s-trialgebras into
corresponding Rota---Baxter algebras.

\begin{thm}
For every s-tri-$\Var$-dendriform algebra $A$ there
exists an algebra $\hat A\in \Var $
with a Rota---Baxter operator $R$ of weight zero
 and an injective map $\iota:A\to \hat A$ such that
$\iota (a\vdash_i b) = R(\iota(a))\circ_i \iota(b)$,
$\iota (a\dashv_i b) = \iota(a)\circ_i R(\iota(b))$,
and $\iota(a\perp_i b) = \iota(a)\circ _i \iota(b)$.
\end{thm}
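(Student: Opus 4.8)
The plan is to mimic the proof of Theorem~\ref{thm:DoubleEmbedding-2}, using essentially the same enveloping algebra but without killing the product on the second summand. Given the s-tri-$\Var$-dendriform algebra $A$, I would take an isomorphic copy $A'$ of its underlying space ($a\leftrightarrow a'$) and define on $\hat A=A\oplus A'$ the operations
\[
\begin{gathered}
a\circ_i b=a\vdash_i b+a\dashv_i b,\quad a\circ_i b'=(a\vdash_i b)',\\
a'\circ_i b=(a\dashv_i b)',\quad a'\circ_i b'=(a\perp_i b)'
\end{gathered}
\]
(for $a,b\in A$, $i\in I$), and set $R\colon\hat A\to\hat A$ by $R(a')=a$, $R(a)=0$, and $\iota\colon A\to\hat A$ by $\iota(a)=a'$. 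Then $\iota$ is injective, and by the definition of the products $\iota(a\vdash_i b)=a\circ_i b'=R(\iota a)\circ_i\iota b$, $\iota(a\dashv_i b)=a'\circ_i b=\iota a\circ_i R(\iota b)$, and $\iota(a\perp_i b)=a'\circ_i b'=\iota a\circ_i\iota b$; so it remains only to verify that $R$ is a Rota---Baxter operator of weight zero and that $\hat A\in\Var$. The former is the same direct computation as in Lemma~\ref{lem:RB-dialg}: since $R$ annihilates $A$ and maps $\hat A$ into $A$, neither $R(x)\circ_i R(y)$ nor $x\circ_i R(y)+R(x)\circ_i y$ ever involves the product $A'\circ_i A'$, so the new summand $(a\perp_i b)'$ does not affect~\eqref{eq:RB-Operator}.

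To prove $\hat A\in\Var$ I would establish the analogues of Lemmas~\ref{lem:NoEmphasized} and~\ref{lem:SomeEmphasized} for this $\hat A$. First, by induction on the degree of a poly-linear $\Omega$-monomial $u=u(x_1,\dots,x_n)$: for $a_1,\dots,a_n\in A$ one gets $u(a_1,\dots,a_n)=u^{*}(a_1,\dots,a_n)$ in the first summand of $\hat A$, where $u^{*}$ is obtained from $u$ by replacing each $\circ_i$ with $\vdash_i+\dashv_i$; this is immediate from $a\circ_i b=a\vdash_i b+a\dashv_i b$. Second, writing $\Phi^{*}$ for the successor map of the definition of s-tri-$\Var$-dendriform algebras (the one built from $v^{*}=\vdash+\dashv$), I would show by induction on the degree that, in $\hat A$, for every nonempty $H\subseteq\{1,\dots,n\}$
\[
u(b_1,\dots,b_n)=\bigl(\Phi^{*}(n)(u^{H})(a_1,\dots,a_n)\bigr)',
\]
where $b_j=a_j'$ for $j\in H$ and $b_j=a_j$ for $j\notin H$. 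Writing $u=v\circ_i w$ and $H=H_1\dot\cup H_2$, there are three cases: if $H_1,H_2\ne\emptyset$, the values of $v$ and $w$ lie in $A'$ and the product $a'\circ_i b'=(a\perp_i b)'$ reproduces the $\perp_i$-branch of $\Phi^{*}$; if $H_1=\emptyset$ (or $H_2=\emptyset$), the corresponding subtree carries no $A'$-argument, so by the first lemma its value is a $v^{*}$ (resp.\ $w^{*}$) in the first summand, and the mixed products $a\circ_i b'=(a\vdash_i b)'$ and $a'\circ_i b=(a\dashv_i b)'$ reproduce the $v^{*}\vdash_i$ and $\dashv_i w^{*}$ branches. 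This inductive bookkeeping of which summand each monomial value lands in is the step I expect to be the main obstacle, though it is a faithful analogue of Lemma~\ref{lem:SomeEmphasized}.

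Finally, I would take any poly-linear identity $f\in S$ of degree $n$, evaluate it on arbitrary $x_1,\dots,x_n\in\hat A$, write $x_j=a_j+b_j'$, and expand by multilinearity; the resulting $2^n$ terms are indexed by the set $H$ of positions at which the $A'$-component was taken. For $H\ne\emptyset$ the second lemma gives that the term has the form $\bigl(\Phi^{*}(n)(f^{H})(\dots)\bigr)'$ and vanishes, because $\Phi^{*}(n)(f^{H})=0$ is a defining identity~\eqref{eq:TriDendId} of $A$ (the case $l=|H|$). For $H=\emptyset$ the first lemma gives that the term equals $f^{*}(a_1,\dots,a_n)$; but the relation $f^{*}=\sum_{j=1}^{n}\Phi^{*}(n)(f^{\{j\}})$ holds already in the free $\Omega^{(3)}$-algebra (no $\perp$ arises when $|H|=1$), and each $\Phi^{*}(n)(f^{\{j\}})$ vanishes in $A$ since for $l=1$ these coincide with the di-$\Var$-dendriform identities. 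Hence every $f\in S$ holds on $\hat A$, i.e.\ $\hat A\in\Var$; combined with the properties of $R$ and $\iota$ above, this proves the theorem. (The relations $(x_1\perp_i x_2)\vdash_j x_3=0$ and $x_1\dashv_i(x_2\perp_j x_3)=0$ from the definition of an s-tri-$\Var$-dendriform algebra are available but are not used in this direction.)
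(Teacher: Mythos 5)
Your construction is exactly the paper's: the same modified Eilenberg-type algebra $\hat A=A\oplus A'$ with $a\circ_i b=a\vdash_i b+a\dashv_i b$ on the first summand, the operator $R$ of Lemma~\ref{lem:RB-dialg}, and $\iota(a)=a'$; the paper then simply states that the remaining steps of Theorem~\ref{thm:DoubleEmbedding} carry over. Your write-up correctly fills in those steps (including the small combinatorial identity $f^*=\sum_j\Phi^*(n)(f^{\{j\}})$ needed for the $H=\emptyset$ term), so the proposal is correct and essentially the same argument.
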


\begin{proof}
To prove the statement, define Eilenberg construction for
$A$ as $\hat{A}=A\oplus A'$
by \eqref{eq:Hat-Defn}, but also with one difference:
$a\circ_{i}b=a\dashv_i b+a\vdash_i b$.
This is a Rota---Baxter algebra with an operator $R$ from Lemma \ref{lem:RB-dialg}.
Other steps of the proof of Theorem \ref{thm:DoubleEmbedding}
 remain the same.
\end{proof}

The work is supported by RFBR (project 09--01--00157),
Integration Project SB RAS No.94, and the
Federal Target Grant
``Scientific and educational staff of innovation Russia'' for 2009--2013
(contracts 02.740.11.5191, 02.740.11.0429, and 14.740.11.0346).

\end{document}